\theoremstyle{definition}
\newtheorem{theorem}{Theorem}
\newtheorem{lemma}[theorem]{Lemma}
\newtheorem{cor}[theorem]{Corollary}
\newtheorem{example}[theorem]{Example}
\newcommand{\eb}{\overline{b}} 
\newcommand{\el}{\overline{l}} 
\newcommand{\db}{\mathring{b}} 
\newcommand{\dl}{\mathring{l}} 
\def\imod#1{\allowbreak\mkern10mu({\operator@font mod}\,\,#1)}
\begin{document}

\title{Matching Expectations}

\date{\today}

\author{Daniel J. Velleman}
\address{Dept. of Mathematics\\
  Amherst College\\
  Amherst, MA 01002}
\email{djvelleman@amherst.edu}

\author{Gregory S. Warrington}
\address{Dept. of Mathematics and Statistics\\
  University of Vermont \\
  Burlington, VT 05401}
\email{gregory.warrington@uvm.edu}

\thanks{Second author partially supported by a grant from the Simons
  Foundation (197419 to GSW)}

\vspace*{.3in}

\begin{abstract}
  The game of memory is played with a deck of $n$ pairs of cards.  The
  cards in each pair are identical.  The deck is shuffled and the
  cards laid face down.  A move consists of flipping over first one
  card then another.  The cards are removed from play if they match.
  Otherwise, they are flipped back over and the next move commences.
  A game ends when all pairs have been matched.  We determine that,
  when the game is played optimally, as $n\rightarrow \infty$:
  \begin{itemize}
  \item The expected number of moves is $(3 - 2\ln 2)n +
    7/8 - 2 \ln 2 \approx 1.61 n$.
  \item The expected number of times two matching cards are unwittingly
    flipped over is $\ln 2$.
  \item The expected number of flips until two matching cards have
    been seen is $2^{2n}/\binom{2n}{n} \sim \sqrt{\pi n}$.
  \end{itemize}
\end{abstract}

\maketitle

\section{Introduction}

Any respectable list of desert island card games must include war to
pass the time (the expected length of a game is infinite, after
all~\cite{war}).  But to maintain your remembrance of things past,
it's hard to beat the game of memory.  Memory is played with a deck of
$2n$ cards.  The cards are numbered from $1$ up to $n$, with each
number appearing twice.  The deck is shuffled and then the cards are
laid face down in a tableau. A \emph{move} consists of flipping first
one card and then a second.  If the cards match, both are removed from
play and the current player moves again.  Otherwise, they are flipped
back over and play passes to the next player.  Play ends when all
pairs have been removed; the player who has removed the most pairs is
the winner.  The \emph{length} of a game is the total number of moves.

Perhaps surprisingly, strategy plays a role in the two-player version
of the game~\cite{strategy}.  In some situations it may not be to a
player's advantage to acquire new information by flipping over a card
that has not been flipped before.  The reason is that her opponent
also acquires the same information.

In this exploration we consider the solitaire version of memory, in
which strategy plays no part.  Many games, such as poker and go fish,
do not fair well as one-person games.  Memory does, as long as one
shifts the goal from collecting the largest number of pairs to
collecting all pairs in the fewest moves.  As an additional
simplification, we assume that the player has perfect memory.

You may find it helpful to play a few games of solitaire memory on
your own.  You can use some subset of an ordinary deck of cards; play
online at a number of websites, such as \cite{classicmem} or
\cite{mathfun}; or, as of this writing, find a computer version on the
wall at the airport (at least in Houston or Minneapolis).

In the solitaire version of memory, acquiring information by flipping
over an unknown card is always helpful.  It is therefore not hard to
show that an optimal strategy is to proceed as described in
Algorithm~\ref{alg:strategy}.  Throughout this paper, we assume that
the player is using this optimal strategy.  We consider three
questions regarding this one-person game of memory.\\

\begin{algorithm}
\caption{Optimal strategy}\label{alg:strategy}
\begin{algorithmic}[1] 
  \If{the positions of both cards of a pair are known}
    \State flip them over and remove them
  \Else
    \State flip over an unknown card
    \If{the location of its mate is known}
    \State flip the mate and remove both
    \Else
      \State flip over another unknown card
      \State (and remove the pair if you are lucky enough to find the mate)
    \EndIf
  \EndIf
\end{algorithmic}
\end{algorithm}



\noindent
\textbf{1. What is the expected length of a game (i.e., how many moves
  are required)? }\\

Each card must be flipped over at least once.  If it is not matched
the first time it is flipped over, then it is flipped again, and
removed, once the location of its mate is known.  Thus, each card is
flipped either once or twice, and therefore the total number of card
flips is between $2n$ and $4n$.  Since two cards are flipped over on
each move, this means that the number of moves in any game will be
between $n$ and $2n$.  Analysis of the possibilities for the end of
the game shows that the last card to be flipped over will only be
flipped once, and therefore the length of the game cannot be $2n$.
Therefore, the length lies between $n$ and $2n-1$.
Example~\ref{ex:extremal} (in Section~\ref{sec:setup}) illustrates
that both of these lengths are possible.

\begin{figure}[htbp]
  \centering 
  \includegraphics{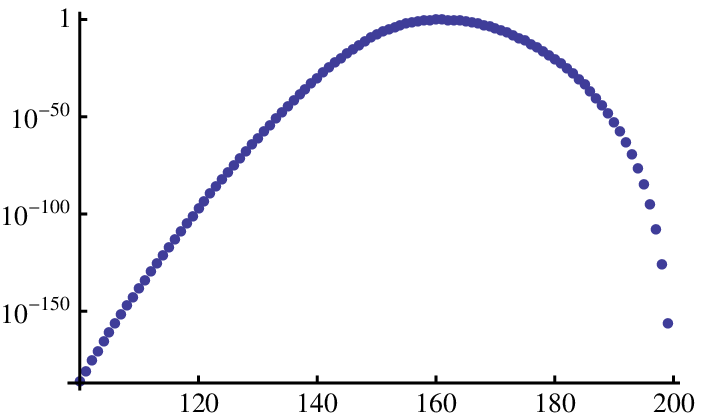} \qquad \includegraphics{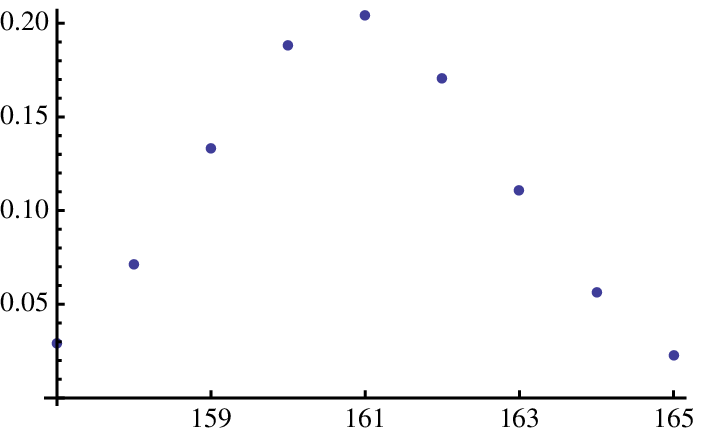}
  \caption{On the left is the distribution of game lengths for
    $n=100$, with logarithmic $y$-scale.  On the right we use a linear
    $y$-scale and a restricted domain to highlight the fact that over
    $98\%$ of games for $n=100$ will have a length lying between $157$
    and $165$.}
  \label{fig:dist-percentage}
\end{figure}

Computer investigations indicate that the expected length of a game is
roughly $1.6 n$.  In Sections~\ref{sec:explen} and \ref{sec:limsums}
we will determine the exact length, and show that it is approximately
$(3 - 2\ln 2)n + 7/8 - 2 \ln 2$, with the error in this approximation
approaching 0 as $n\to \infty$.  Figure~\ref{fig:dist-percentage} shows the distribution of game lengths for $n=100$.  The expected length in this case is about 160.8589,
and our approximation gives about 160.8593.

A \emph{lucky} move is one in which the player flips two cards that
have not been flipped before and they happen to match.  Note that a
game of length $n$ is one in which every move is lucky.  The second
question, to be addressed in Section~\ref{sec:sumdleven}, is:\\

\noindent
\textbf{2. What is the expected number of lucky moves in a game?}\\

Early in the game, the player has little information about the
locations of the various cards.  In all likelihood, each card flipped
over will be the first of its pair to be seen.  But certainly by the
$(n+1)$th flip, the player will begin encountering the mates of cards
she has already seen.  This brings us to our third and final question,
to be
addressed in Section~\ref{sec:repeat}:\\

\noindent
\textbf{3. How many flips are required before the player should expect
  to have seen both cards of a pair?}

\section{Combinatorial setup}
\label{sec:setup}

A game of memory is not affected by how the cards are physically laid
out.  In light of this, it will be convenient to assume that the cards
are arranged in a single row.  And since the cards are shuffled before
they are laid out, when our strategy calls for the player to flip an
unknown card, there is no advantage to flipping one particular unknown
card in preference to another.  We may therefore assume that unknown
cards are simply flipped from left to right.  With this convention, game
play depends only on the order in which the cards are laid out.

\begin{example}\label{ex:n=6ex}
Suppose that $n=6$ and the cards are laid out in the order
\[
1\ 2\ 1\ 6\ 2\ 3\ 3\ 5\ 4\ 4\ 5\ 6.
\]
Play will proceed as follows:
\begin{enumerate}
\item The player flips over the first two cards, which are a 1 and a
  2.  Since they don't match, they are flipped back over.
\item The player flips over the next card, which is another 1.
  Remembering that the first card was also a 1, the player flips the
  first card to get the match and removes the two 1s.
\item The player flips the next two cards, which are a 6 and a 2.
  They don't match, so they are flipped back.
\item The player now knows the locations of both 2s.  She therefore
  flips them and removes them.
\item The player flips the next two cards, which are both 3s.  They
  match, so she removes them.  Since these cards had not been flipped
  before, this is an example of a lucky move.
\item The player flips the next two cards: a 5 and a 4.  They don't
  match, so they are flipped back.
\item The player flips the next card, which is the second 4.
  Remembering that the previous card was also a 4, she flips it and
  removes the two 4s.
\item The player flips the second 5, flips the first 5 again, and
  removes the two 5s.
\item The player flips the second 6, flips the first 6 again, and
  removes the two 6s.
\end{enumerate}
\end{example}

\begin{example}\label{ex:extremal}
  We can now give examples of the shortest and longest possible games.
  If the cards are laid out in the order
  \[
  1\ 1\ 2\ 2\ 3\ 3\ \cdots\ n\ n,
  \]
  then the game will consist of $n$ lucky moves; this is the shortest
  possible game.  It is only slightly harder to illustrate the longest
  possible game.  We will let you verify that if the cards are laid
  out in the order
  \[
  1\ 2\ 3\ 1\ 4\ 2\ \cdots\ k\ \ k-2\ \cdots\ n-1\ \ n-3\ \ n\ \ n-2\ \ n-1\ \ n,
  \]
then it will take $2n-1$ moves to complete the game.
\end{example}

\begin{figure}[htbp]
  \centering {\scalebox{1}{\includegraphics{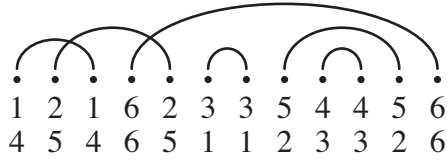}}}
  \caption{Two deals for $n=6$ with the same interconnection network.}
  \label{fig:icnetwork}
\end{figure}

Although we have presented all of our examples by specifying the order
in which the cards are laid out, in fact game play depends on only the
relative positions of the pairs.  These relative positions can be
visualized using the notion of an \emph{interconnection network}.  An
interconnection network~\cite{fs, loz} consists of $2n$ points along
with $n$ endpoint-disjoint arcs that connect pairs of points.
(Equivalently, such a network encodes an involution without fixed
points.)  Figure~\ref{fig:icnetwork} illustrates two deals for $n=6$
along with the associated interconnection network, in which points
labeled with the same number are connected by an arc.  (The first of
these deals is the one in Example~\ref{ex:n=6ex} above.)

The two deals in Figure \ref{fig:icnetwork} differ only in the labels
we have assigned to each pair of cards, and therefore the first deal
leads to essentially the same game as the second.  We wish to identify
those games with the same underlying interconnection network.  In
general, there is an $n!$-to-$1$ map from size-$2n$ deals to size-$2n$
interconnection networks.  This map for $n=2$ is illustrated in
Figure~\ref{fig:icn-n2}.  We prefer to remain in the realm of card
games, so we designate a representative for each class of deals.  We
call a deal \emph{standard} if, when the game is played, the pairs are
removed in order from 1 to $n$.  Put another way, a deal is standard
if the second occurrence of $i$ occurs to the left of the second
occurrence of $i+1$ for each $i$ between 1 and $n$.  The deals in
Examples~\ref{ex:n=6ex} and~\ref{ex:extremal} are all standard.

\begin{figure}[htbp]
  \centering {\scalebox{1}{\includegraphics{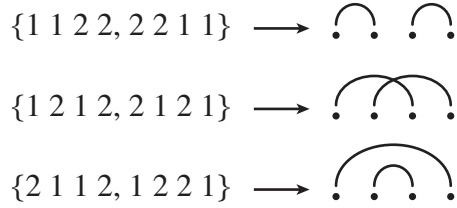}}}
  \caption{Memory deals for $n=2$ paired according to interconnection network.}
  \label{fig:icn-n2}
\end{figure}

Let $M_n$ denote the set of standard permutations of the multiset
$\{1,1,2,2,\ldots,n,n\}$.  At this point, we have reduced a game of
memory to an element of some $M_n$.  We write these elements in
one-line notation (reminiscent of an actual row of cards).  For
example, define $\sigma \in M_2$ by $\sigma(1)=\sigma(4)=2$ and
$\sigma(2)=\sigma(3)=1$.  The element $\sigma$ represents the game
$2\,1\,1\,2$, in which the first and fourth cards are 2s and the
second and third are 1s.

For completeness, we include a proof of the following formula for
the number of inequivalent games of memory---that is, the cardinality of $M_n$.

\begin{lemma}
  For $n\geq 1$, the number of standard games of memory on a deck of
  $2n$ cards is
  \begin{equation*}
    (2n-1)!! = 1\cdot 3\cdot 5 \cdots (2n-1) = \frac{(2n)!}{2^n n!}.
  \end{equation*}
\end{lemma}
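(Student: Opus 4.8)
The plan is to reduce a standard deal on $2n$ cards to a standard deal on $2(n-1)$ cards together with one extra choice, obtaining a recursion for $|M_n|$, and then to deduce the closed form algebraically.

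First I would argue that in any standard deal the second occurrence of $n$ sits in the final position $2n$. The key observation is that position $2n$ cannot hold a first occurrence: if it did, that card's mate would have to lie strictly further to the right, which is impossible. Hence position $2n$ holds some second occurrence, and since the second occurrence of $n$ is by definition the rightmost of all the second occurrences, it must be the card in position $2n$.

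Next I would set up the recursion. Given a standard deal $\sigma \in M_n$, delete the second $n$ (in position $2n$) and the first $n$ (in one of positions $1,\ldots,2n-1$); the surviving symbols are exactly $1,1,\ldots,(n-1),(n-1)$, and they form a standard deal in $M_{n-1}$ because the relative order of the second occurrences of $1,\ldots,n-1$ is unchanged by the deletion. Recording which of the $2n-1$ gaps among the surviving symbols the first $n$ had occupied gives a map $M_n \to M_{n-1}\times\{1,\ldots,2n-1\}$. The inverse inserts a first $n$ into a chosen gap of a standard deal on $2n-2$ symbols and appends a second $n$ at the end; the appended $n$ is automatically the rightmost second occurrence, so the output is standard. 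These maps are mutually inverse, which gives $|M_n| = (2n-1)\,|M_{n-1}|$.

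Since $|M_1| = 1$ (the only deal is $1\,1$), the recursion yields $|M_n| = (2n-1)(2n-3)\cdots 3\cdot 1 = (2n-1)!!$. Finally I would record the elementary identity $(2n)! = \big[(2n)(2n-2)\cdots 2\big]\big[(2n-1)(2n-3)\cdots 1\big] = 2^n\,n!\,(2n-1)!!$, which rearranges to $(2n-1)!! = (2n)!/(2^n n!)$. The steps are all routine; the only point demanding care is verifying that the forward and inverse maps genuinely preserve standardness — that deleting the two $n$'s leaves the order of the remaining second occurrences intact, and that appending the new second $n$ at the far right keeps it the rightmost. An equally short alternative is to invoke the $n!$-to-$1$ map from deals to interconnection networks noted above: each network has a unique standard representative (label the arcs $1,\ldots,n$ in increasing order of their right endpoints), so $M_n$ is in bijection with perfect matchings of $2n$ points, and these number $(2n-1)!!$ by an analogous sequential-choice count.
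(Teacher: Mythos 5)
Your proof is correct and takes essentially the same approach as the paper: the recursion $|M_n| = (2n-1)\,|M_{n-1}|$ obtained by inserting the first $n$ into one of $2n-1$ positions of a standard deal in $M_{n-1}$ and appending the second $n$ at the end, followed by the algebraic identity relating $(2n-1)!!$ to $(2n)!/(2^n n!)$. You supply somewhat more detail (verifying that the bijection preserves standardness in both directions) and note a pleasant alternative via perfect matchings, but the core argument is identical to the paper's induction.
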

\begin{proof}
  The first equality involving the \emph{double factorial} function is
  true by definition.  The second equality can be checked by simple
  algebra.  We prove by induction that the number of games is given by
  the above product of odd integers.

  For $n=1$ the formula is trivially true.  Now suppose that $n \ge
  2$.  Any standard deal of a deck of size $2n$ can be constructed by
  adding two copies of $n$ to a standard deal of a deck of size
  $2(n-1)$.  In order for the augmented deal to be standard, the
  second occurrence of $n$ must be at the end of the row.  Thus, there
  are $2(n-1)+1 = 2n-1$ possible locations for the first occurrence of
  $n$.  The lemma therefore follows by the principle of mathematical
  induction.
\end{proof}

\section{Expected position of the first match} %
\label{sec:repeat}

Fix $n \ge 1$.  For any $\sigma \in M_n$, let $f(\sigma)$ denote the
smallest index $j$ such that $\sigma(j) = \sigma(i)$ for some $i < j$.
In other words, $f(\sigma)$ is the position of the first card in
$\sigma$ that matches a previous card.  Since we are assuming that the
deal is standard, this first matching card will be a 1.  Let $a(n,j)$
count the number of permutations $\sigma\in M_n$ for which $f(\sigma)
= j$.  Notice that $f(\sigma)$ is always between 2 and $n+1$, so
$a(n,j)=0$ if $j<2$ or $j > n+1$.  Then for a randomly chosen $\sigma
\in M_n$, the probability that $f(\sigma) =j$ is
\[
\frac{a(n,j)}{(2n)!/(2^nn!)},
\]
and therefore the expected value of $f(\sigma)$ is
\begin{equation}\label{eq:expfsigma}
  \sum_{j=2}^{n+1} j \cdot \frac{a(n,j)}{(2n!)/(2^nn!)} = 
  \frac{\sum_{j=2}^{n+1} j \cdot a(n,j)}{(2n)!/(2^nn!)}.
\end{equation}

It turns out that there is a simple formula for the numerator of the
last fraction:
\begin{lemma}\label{lem:expnum}
For every positive integer $n$,
\[
\sum_{j=2}^{n+1} j \cdot a(n,j) = 2^n n!.
\]
\end{lemma}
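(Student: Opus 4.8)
The plan is to discard the labels and work with the underlying interconnection network, i.e.\ a fixed-point-free involution on the $2n$ positions; since $f(\sigma)$ depends only on the relative positions of the pairs, this loses no information, and the standard deal is merely the canonical representative counted by $|M_n| = (2n)!/(2^n n!)$. Call a position an \emph{opener} if its partner lies to its right and a \emph{closer} otherwise, so that $f(\sigma)$ is exactly the position of the leftmost closer. Rather than compute $a(n,j)$ itself, I would obtain the weighted sum through the tail-sum identity
\[
\sum_{j=2}^{n+1} j\,a(n,j) \;=\; \sum_{\sigma\in M_n} f(\sigma) \;=\; \sum_{j\ge 0} A(n,j),
\qquad
A(n,j) := \#\{\sigma\in M_n : f(\sigma) > j\},
\]
where $A(n,j)$ counts the deals whose first $j$ positions are all openers.

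The key enumeration is a closed form for $A(n,j)$. The condition that positions $1,\dots,j$ are all openers is equivalent to saying that no arc is contained in $\{1,\dots,j\}$, i.e.\ each of the first $j$ points is matched to one of the $2n-j$ points in $\{j+1,\dots,2n\}$. I would count such involutions in two steps: first choose an ordered injection of $1,\dots,j$ into $\{j+1,\dots,2n\}$, giving $(2n-j)!/(2n-2j)!$ possibilities, then match the remaining $2n-2j$ points among themselves in $(2n-2j-1)!! = (2n-2j)!/\bigl(2^{\,n-j}(n-j)!\bigr)$ ways. Multiplying and cancelling yields
\[
A(n,j) = \frac{(2n-j)!}{2^{\,n-j}(n-j)!} \quad (0\le j\le n),
\qquad A(n,j)=0 \ (j>n),
\]
which one can sanity-check against $A(n,0)=|M_n|$ and $A(n,n)=n!$.

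Substituting this formula into the tail sum, reindexing by $k=n-j$, and dividing by $n!$ reduces the entire lemma to the single binomial identity
\[
\sum_{k=0}^{n}\binom{n+k}{k}2^{-k} = 2^{n},
\]
since $\sum_{j=0}^n A(n,j) = 2^n n!$ becomes this after writing $(n+k)!/(k!\,n!) = \binom{n+k}{k}$. This last identity is where the real content lies, and it is the step I expect to be the main obstacle. I would prove it by induction on $n$, writing $T_n$ for the left-hand side: apply Pascal's rule $\binom{n+1+k}{k} = \binom{n+k}{k} + \binom{n+k}{k-1}$ to $T_{n+1}$, shift the index in the second sum so that it too is expressed in terms of $T_{n+1}$, and collect terms. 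The boundary binomials combine via $\binom{2n+2}{n+1} = 2\binom{2n+1}{n+1}$ so as to cancel exactly, leaving the recurrence $\tfrac12 T_{n+1} = T_n$ with $T_0 = 1$, hence $T_n = 2^n$. Alternatively, the identity rearranges to $\sum_{k=0}^n \binom{n+k}{k}2^{-(n+k+1)} = \tfrac12$, which is simply the probability that the $(n{+}1)$-st head precedes the $(n{+}1)$-st tail in a stream of fair coin flips, split according to the number $k$ of tails already seen when the $(n{+}1)$-st head arrives; this gives a one-line probabilistic proof.
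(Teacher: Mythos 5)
Your proof is correct, but it takes a genuinely different route from the paper's. The paper derives the recurrence $a(n,j) = (2n-1-j)\,a(n-1,j) + (j-1)\,a(n-1,j-1)$ from the insert-an-$n$ construction and then pushes the weighted sum through an induction on $n$; the whole computation telescopes in one display. You instead avoid $a(n,j)$ entirely: the tail-sum identity $\sum_\sigma f(\sigma) = \sum_{j\ge 0} A(n,j)$ is applied correctly (note $A(n,0)=A(n,1)=|M_n|$, consistent with $f\ge 2$), your closed form $A(n,j) = (2n-j)!/\bigl(2^{\,n-j}(n-j)!\bigr)$ is right (the passage to fixed-point-free involutions is harmless since $f$ depends only on the interconnection network), and the reduction to $\sum_{k=0}^n \binom{n+k}{k}2^{-k} = 2^n$ checks out, as do both of your proofs of that identity --- the Pascal-rule induction does close up via $\binom{2n+2}{n+1} = 2\binom{2n+1}{n+1}$, and the coin-flip interpretation is a valid one-line argument by symmetry. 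What your approach buys is strictly more information: an explicit formula for the entire tail distribution $\Pr[f(\sigma) > j] = \binom{2n-j}{n}\big/\bigl(2^{\,j}\binom{2n}{n}\bigr)$ rather than just its expectation, plus a conceptual link to a classical binomial identity. What it costs is that the ``real content'' gets displaced into that identity, which is external to the combinatorics of the game, whereas the paper's recurrence is self-contained and is reused later (the same insertion construction drives the block-length recurrences in Section 5). Either argument would be acceptable; yours is arguably the more illuminating one for this lemma in isolation.
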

\begin{proof}
  We proceed by induction on $n$.  The equation is easily seen to be
  correct when $n=1$.  Now suppose that $n\ge 2$, and the equation
  holds for $n-1$.

  A permutation $\sigma \in M_n$ with $f(\sigma) = j$ can be
  constructed from a permutation in $M_{n-1}$ in one of two ways.  The
  first way is to start with a permutation $\sigma' \in M_{n-1}$ with
  $f(\sigma')=j$, insert an $n$ into $\sigma'$ somewhere after
  position $j$, and then add a second $n$ at the end.  The number of
  permutations that can be constructed in this way is $(2n-1-j) \cdot
  a(n-1,j)$.  The second way is to start with a permutation $\sigma'
  \in M_{n-1}$ with $f(\sigma') = j-1$, insert an $n$ before position
  $j-1$, and then add another $n$ at the end.  This can be done in
  $(j-1) \cdot a(n-1,j-1)$ ways.  Thus, we have the recurrence
\[
a(n,j) = (2n-1-j) \cdot a(n-1,j) + (j-1) \cdot a(n-1,j-1).
\]

We can now compute:
\begin{align*}
\sum_{j=2}^{n+1} j \cdot a(n,j) &= \sum_{j=2}^{n+1} j \cdot [(2n-1-j) \cdot a(n-1,j) + (j-1) \cdot a(n-1,j-1)]\\
&= \sum_{j=2}^{n+1} j \cdot (2n-1-j) \cdot a(n-1,j) + \sum_{j=2}^{n+1} j \cdot (j-1) \cdot a(n-1,j-1)\\
&= \sum_{j=2}^{n+1} j \cdot (2n-1-j) \cdot a(n-1,j) + \sum_{j=1}^n (j+1) \cdot j \cdot a(n-1,j)\\
&= \sum_{j=2}^n 2nj \cdot a(n-1,j) = 2n \cdot \sum_{j=2}^n j \cdot a(n-1,j)\\
&= 2n \cdot 2^{n-1} (n-1)! \qquad\qquad \text{(by the inductive hypothesis)}\\
&= 2^n n!. \qedhere
\end{align*}
\end{proof}

\begin{theorem}
  For $n\geq 1$, the expected position of the first match is
\[
\frac{2^{2n}}{\binom{2n}{n}}.
\]
\end{theorem}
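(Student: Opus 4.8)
The plan is to derive the theorem as an immediate corollary of Lemma~\ref{lem:expnum} together with the expected-value formula already set up in equation~\eqref{eq:expfsigma}. By the definition of $a(n,j)$, the expected position of the first match equals
\[
\frac{\sum_{j=2}^{n+1} j \cdot a(n,j)}{(2n)!/(2^n n!)},
\]
so the entire task reduces to evaluating the numerator and then simplifying the resulting ratio. This is precisely the quantity that Lemma~\ref{lem:expnum} computes, so there is no genuine combinatorial obstacle remaining; all that is left is bookkeeping with factorials.

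First I would invoke Lemma~\ref{lem:expnum} to replace the numerator $\sum_{j=2}^{n+1} j \cdot a(n,j)$ by its closed form $2^n n!$. This turns the expected value into
\[
\frac{2^n n!}{(2n)!/(2^n n!)} = \frac{(2^n n!)^2}{(2n)!} = \frac{2^{2n}(n!)^2}{(2n)!}.
\]
Next I would recognize the denominator factor by recalling that $\binom{2n}{n} = (2n)!/(n!)^2$, so that $(n!)^2/(2n)! = 1/\binom{2n}{n}$. Substituting this identity gives
\[
\frac{2^{2n}(n!)^2}{(2n)!} = \frac{2^{2n}}{\binom{2n}{n}},
\]
which is exactly the claimed expression.

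The only step requiring any care is the algebraic manipulation of the compound fraction $2^n n! \big/ \bigl((2n)!/(2^n n!)\bigr)$, where one must remember that dividing by $(2n)!/(2^n n!)$ multiplies by its reciprocal, producing the square $(2^n n!)^2$ in the numerator. Once that is handled correctly, the factorial identity for the central binomial coefficient finishes the proof with no further work. I would therefore expect the write-up to be a single short display chain rather than an argument with a delicate core; the substantive content has already been absorbed into the preceding lemma.
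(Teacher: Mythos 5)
Your proposal is correct and follows exactly the paper's own proof: invoke equation~\eqref{eq:expfsigma} and Lemma~\ref{lem:expnum}, then simplify the compound fraction $2^n n!\big/\bigl((2n)!/(2^n n!)\bigr)$ to $2^{2n}/\binom{2n}{n}$. No differences worth noting.
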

\begin{proof}
  Using equation~\eqref{eq:expfsigma} and Lemma~\ref{lem:expnum}, we
  find that the expected position of the first match is
\[
\frac{\sum_{j=2}^{n+1} j \cdot a(n,j)}{(2n)!/(2^n n!)} = \frac{2^n n!}{(2n)!/(2^n n!)} = \frac{2^{2n}}{(2n)!/n!^2} = \frac{2^{2n}}{\binom{2n}{n}}. \qedhere
\]
\end{proof}

\begin{cor}
  The expected position of the first match grows as $\sqrt{\pi n}$.
\end{cor}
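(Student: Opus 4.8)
The plan is to read off the exact value supplied by the preceding Theorem and then extract its asymptotics. Since the expected position of the first match equals $2^{2n}/\binom{2n}{n}$ exactly, the corollary amounts to showing that $2^{2n}/\binom{2n}{n} \sim \sqrt{\pi n}$, or equivalently the classical central binomial estimate $\binom{2n}{n} \sim 4^n/\sqrt{\pi n}$. Everything therefore reduces to establishing this single well-known asymptotic and then dividing.

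First I would invoke Stirling's approximation $m! \sim \sqrt{2\pi m}\,(m/e)^m$ and apply it to the three factorials in $\binom{2n}{n} = (2n)!/(n!)^2$. Writing
\[
\binom{2n}{n} = \frac{(2n)!}{(n!)^2} \sim \frac{\sqrt{4\pi n}\,(2n/e)^{2n}}{\bigl(\sqrt{2\pi n}\,(n/e)^n\bigr)^2},
\]
the factors $(2n/e)^{2n}$ in the numerator and $(n/e)^{2n}$ in the denominator cancel up to a factor of $2^{2n} = 4^n$, while the square-root prefactors reduce $\sqrt{4\pi n}/(2\pi n)$ to $1/\sqrt{\pi n}$. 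This yields $\binom{2n}{n} \sim 4^n/\sqrt{\pi n}$, and dividing $2^{2n} = 4^n$ by this gives the claimed $\sqrt{\pi n}$.

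There is no genuine obstacle here; the only thing requiring care is the bookkeeping of the Stirling factors, namely confirming that all of the exponential terms cancel exactly and that the three square-root prefactors combine to the single factor $1/\sqrt{\pi n}$. If one prefers to avoid quoting Stirling, the same estimate $\binom{2n}{n} \sim 4^n/\sqrt{\pi n}$ follows from the Wallis product for $\pi/2$, but the Stirling route is the most direct.
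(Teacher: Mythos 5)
Your proposal is correct and is essentially the paper's own argument: both apply Stirling's approximation to the factorials in $\binom{2n}{n}$ and observe that the exponential factors cancel to leave $2^{2n}/\binom{2n}{n} \sim \sqrt{\pi n}$. The bookkeeping in your second paragraph checks out.
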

\begin{proof}
  Straightforward applications of Stirling's approximation $n!\sim
  \sqrt{2\pi n}\left(\frac{n}{e}\right)^n$ (see, for example,
  ~\cite{concmath}) yield
  \begin{equation*}
    \frac{2^{2n}}{\binom{2n}{n}} \sim
    2^{2n}\frac{(\sqrt{2\pi n}\left(\frac{n}{e}\right)^n)^2}
    {\sqrt{2\pi (2n)}\left(\frac{2n}{e}\right)^{2n}}
     = \sqrt{\pi n}. \qedhere
  \end{equation*}
\end{proof}

\section{Expected length of a game}
\label{sec:explen}

Our main goal in this section is to prove the following theorem:

\begin{theorem}\label{thm:main}
  The expected length of a game with $2n$ cards is $(3 - 2\ln 2)n +
  7/8 - 2 \ln 2 + \epsilon_n$, where $\lim_{n \to \infty} \epsilon_n =
  0$.
\end{theorem}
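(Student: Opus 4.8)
The plan is to pass to the uniform model and extract an exact expression for the length before doing any asymptotics. The combinatorial setup of Section~\ref{sec:setup} identifies standard deals with interconnection networks, and there is exactly one standard deal per network, so a uniformly random standard deal is the same as a uniformly random fixed-point-free involution (perfect matching) of $\{1,\dots,2n\}$; I would work with this matching. The key structural point is that, because unknown cards are flipped from left to right, the positions are \emph{revealed} (first-flipped) in the order $1,2,\dots,2n$, so at every stage the revealed positions form a prefix $\{1,\dots,k\}$. Encoding the matching by the ballot word $w\in\{L,R\}^{2n}$ with $w_j=L$ when $j$ is the left endpoint of its arc and $w_j=R$ otherwise, the moves split into \emph{reveal moves}, which consume the next one or two positions, and \emph{cleanup moves}, which remove an already-located pair. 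At the current frontier an $R$ is consumed as a single-card (type~B) move that matches it to its known mate, while an $L$ is consumed together with the following position as a two-card (type~C) move; the type~C move is \emph{lucky} exactly when those two positions form an arc.

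Next I would count card flips. Every card is first-flipped once, and is flipped a second time unless it is removed on its first flip; the cards removed on their first flip are precisely the single fresh card of each type~B move together with the two cards of each lucky move. Writing $n_B$ for the number of type~B reveal moves and $\ell$ for the number of lucky moves, the number of cards flipped twice is $2n-n_B-2\ell$, and since each move flips exactly two cards,
\[
L = n + \tfrac12\bigl(2n - n_B - 2\ell\bigr) = 2n - \tfrac12 n_B - \ell .
\]
(One checks this against Example~\ref{ex:n=6ex}: there $n_B=4$ and $\ell=1$, giving $L=9$.) Hence $E[L]=2n-\tfrac12 E[n_B]-E[\ell]$, and the theorem reduces to evaluating $E[n_B]$ and $E[\ell]$ up to an additive error tending to $0$. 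Because the target includes the constant $7/8-2\ln 2$, leading-order asymptotics will not be enough: I must compute both expectations to $o(1)$.

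To compute these I would use indicator variables. Both quantities are governed by which positions are \emph{block-starts}, i.e.\ positions where a reveal move begins: position $1$ is a block-start, and from a block-start $j$ the next block-start is $j+2$ if $w_j=L$ and $j+1$ if $w_j=R$. Then $n_B$ counts block-starts with $w_j=R$, while $\ell$ counts block-starts with $w_j=L$ for which $j$ and $j+1$ form an arc. Expanding each as a sum of indicators over $j$ and taking expectations, I would evaluate $P(j \text{ is a block-start with } w_j=R)$ and $P(j \text{ is a block-start and } (j,j{+}1) \text{ is an arc})$ by enumerating the matchings with the prescribed local configuration at $j$, using the fact that the number of matchings realizing a given ballot word equals the product, over the positions holding an $R$, of the height (number of as-yet-unmatched left endpoints) just before that position. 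Summing over $j$ and applying Stirling's approximation together with central-binomial sums (as in the first-match analysis above) should yield the linear term $(3-2\ln 2)n$ and the constant $7/8-2\ln 2$.

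The main obstacle is that being a block-start is not a local property: it is a self-referential function of the entire prefix of $w$, since the block-start recursion reads $w$ only at earlier block-starts. Compounding this, under the uniform matching the word $w$ is \emph{not} uniform over ballot sequences but is weighted by the product-of-heights above, so the block-start probabilities must be computed against that measure. I expect the bulk of the work to be organizing these probabilities into summable closed forms---most naturally via a recursion or generating function that tracks the parse state and the parity of the frontier---and then pushing the asymptotics far enough to capture the constant term rather than only the growth rate. The lucky-move count $E[\ell]$, which should converge to $\ln 2$, provides a useful consistency check on the bookkeeping.
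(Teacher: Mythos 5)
Your reduction of the game length to two statistics is correct and is equivalent to the paper's: a type~B move occurs exactly when a block (in the paper's sense) has odd length, so your $n_B$ equals $n-e(\sigma)$ and your formula $L=2n-\tfrac12 n_B-\ell$ is the same as the paper's $L=\tfrac{3n}{2}+\tfrac{e(\sigma)}{2}-l(\sigma)$. The flip-counting derivation, the check against Example~\ref{ex:n=6ex}, and your observation that the ballot word of a uniform matching is weighted by a product of heights are all sound.

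The gap is that everything after the reduction---which is where essentially all of the work lies---is left as a plan whose obstacles you name but do not overcome. To obtain the constant $7/8-2\ln 2$ you must compute $E[n_B]$ and $E[\ell]$ with $o(1)$ additive error, and your proposal produces neither a closed form nor a solvable recurrence nor an error bound for the non-local block-start probabilities under the height-weighted measure. The paper sidesteps the non-locality by indexing over block \emph{lengths} rather than positions: inserting the pair $n$ into a deal from $M_{n-1}$ acts simply on blocks, yielding recurrences for the expected counts $\eb(n,j)$ and $\el(n,j)$ with termwise limits $2(2n+1)/(j(j+1)(j+2))$ and $1/(j(j-1))$. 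Even then the constant is delicate: summing those limits over even $j$ gives only $3/4-2\ln 2$, and the missing $1/8$ comes from proving $\lim_{n\to\infty}\sum_{i}\db(n,2i)=1/4$---the correction terms do \emph{not} vanish when restricted to even $j$, even though their sum over all $j\ge 2$ is exactly $1/2$. Establishing that limit requires the unimodality of $j\mapsto\db(n,j)$ (Lemma~\ref{lem:dbshape}), the even/odd partial-sum comparison for monotone sequences (Lemma~\ref{lem:evenodddiff}), and the uniform bound \eqref{eq:wallis}. Your plan of ``pushing the asymptotics far enough to capture the constant term'' supplies no mechanism for detecting this correction, and any position-indexed computation will have to confront the same even/odd cancellation in some guise. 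Until $E[n_B]$ and $E[\ell]$ are actually evaluated to $o(1)$, the theorem is not proved.
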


Consider any standard deal $\sigma \in M_n$.  In the game based on
$\sigma$, the player will turn over unknown cards two at a time until
she comes to the first matching card at position $f(\sigma)$.  What
happens at that point depends on whether $f(\sigma)$ is even or odd.
If $f(\sigma)$ is odd, then the card at position $f(\sigma)$, which is
the second 1, will be flipped over at the beginning of a move.  Since
we are assuming that the player has perfect memory, she will then flip
the first 1 and remove both 1s.  The number of moves required to flip
all the cards up to position $f(\sigma)$ and then remove the matching
1s is therefore $(f(\sigma)+1)/2 = f(\sigma)/2 + 1/2$.  On the other
hand, if $f(\sigma)$ is even then the card at position $f(\sigma)$
will be the second card flipped in a move.  If the first card flipped
in that move was also a 1 then the move is lucky and the two 1s are
removed right away.  If not, then it will take an additional move to
flip the two 1s and remove them.  Thus, if $f(\sigma)$ is even then
the number of moves to flip the first $f(\sigma)$ cards and remove the
two 1s is either $f(\sigma)/2$, if the first 1 is at position
$f(\sigma)-1$, or $f(\sigma)/2+1$, if not.

Once the two 1s have been removed, the player continues by flipping
the unknown card at position $f(\sigma)+1$, and she again flips
unknown cards two at a time until she reaches the next matching card,
the second 2.  If the second 2 is at position $f(\sigma)+k$, then as
above the number of moves required to flip all of the cards from
position $f(\sigma)+1$ to $f(\sigma)+k$ and remove the matching 2s is
$k/2+1/2$ if $k$ is odd, $k/2$ if $k$ is even and the first 2 is at
position $f(\sigma)+k-1$, and $k/2+1$ otherwise.  Once the 2s have
been removed, the player flips unknown cards two at a time until she
finds the second 3, and she continues in this way until all the pairs
have been matched and removed.

Thus, we see that the second occurrences of each of the numbers from 1
to $n$ can be thought of as breaking the deal $\sigma$ into $n$
blocks, with each block ending with a second occurrence.  The function
$f(\sigma)$ that we studied in the last section is the length of the
first block, but we will have to consider the lengths of all of the
blocks to determine the number of moves in the game.  Let the lengths
of the blocks be $b_1(\sigma) = f(\sigma)$, $b_2(\sigma)$, \ldots,
$b_n(\sigma)$, and notice that since the entire deck contains $2n$
cards, $b_1(\sigma) + b_2(\sigma) + \cdots + b_n(\sigma) = 2n$.

We will call a block \emph{lucky} if the block has length at least 2
and the last two cards in the block match.  The analysis above shows
that the number of moves required to flip all of the cards in the
$i$th block and then remove the two cards labeled $i$ is
$b_i(\sigma)/2 + a_i(\sigma)$, where $a_i(\sigma)$ is 1/2 if
$b_i(\sigma)$ is odd, 0 if $b_i(\sigma)$ is even and the $i$th block
is lucky, and 1 if $b_i(\sigma)$ is even and the $i$th block is not
lucky.  Thus, if $e(\sigma)$ is the number of blocks in $\sigma$ of
even length and $l(\sigma)$ is the number of these even-length blocks
that are lucky, then the length of the game is
\[
\sum_{i=1}^n \left(\frac{b_i(\sigma)}{2} + a_i(\sigma)\right) = \frac{\sum_{i=1}^n b_i(\sigma)}{2} + (n-e(\sigma)) \cdot \frac{1}{2} + (e(\sigma)-l(\sigma)) \cdot 1 = \frac{3n}{2} + \frac{e(\sigma)}{2} - l(\sigma).
\]

For example, if $n=6$ and $\sigma$ is the deal given in
Example~\ref{ex:n=6ex}, then the lengths of the blocks are 3, 2, 2, 3,
1, 1, and the third and fourth blocks are lucky.  Therefore $e(\sigma)
= 2$ and $l(\sigma)=1$, and the length of the game is
\[
\frac{3 \cdot 6}{2} + \frac{2}{2} - 1 = 9,
\]
which is in agreement with our analysis of this game in Example~\ref{ex:n=6ex}.

To prove Theorem~\ref{thm:main} we will need to find the expected
values of $e(\sigma)$ and $l(\sigma)$: the expected number of even
blocks and the expected number of lucky even blocks.  To compute
these, we will keep track of the number of blocks of each kind and
each length among all of the $(2n)!/(2^nn!)$ standard deals.  For any
positive integer $j$, let $b(n,j)$ be the total number of blocks of
length $j$ in all standard deals of a deck of $2n$ cards, and for $j
\ge 2$ let $l(n,j)$ be the total number of lucky blocks of length $j$.
The greatest possible length for a block is $n+1$, so $b(n,j) = l(n,j)
= 0$ for $j \ge n+2$.  The expected number of blocks of length $j$ is
then
\[
\eb(n,j) = \frac{b(n,j)}{(2n)!/(2^n n!)},
\]
and similarly the expected number of lucky blocks of length $j$ is
\[
\el(n,j) =
\frac{l(n,j)}{(2n)!/(2^n n!)}.
\]
Using this notation, we can write the expected value of $e(\sigma)$ as
$\sum_{i=1}^\infty \eb(n,2i)$, and the expected value of $l(\sigma)$
is $\sum_{i=1}^\infty \el(n,2i)$.  Notice that, although we have
written these expected values as infinite sums, in fact the sums are
finite, since all but finitely many terms in each sum are 0.  Thus the
expected length of a game is
\begin{equation}\label{eq:lengthform}
\frac{3n}{2} + \frac{\sum_{i=1}^\infty \eb(n,2i)}{2} - \sum_{i=1}^\infty \el(n,2i).
\end{equation}

There are several steps required to pass from
equation~(\ref{eq:lengthform}) to Theorem~\ref{thm:main}.  The first
is to replace the $\eb(n,j)$ and $\el(n,j)$ in~(\ref{eq:lengthform})
with (what turn out to be) their asymptotic limits.  We will show
that, for large $n$,
\begin{equation}\label{eq:blasymp}
\eb(n,j) \approx \frac{2(2n+1)}{j(j+1)(j+2)}\qquad\text{ and for $j \ge 2$, } \qquad
\el(n,j) \approx \frac{1}{j(j-1)}.
\end{equation}
We therefore define
\begin{equation}\label{eq:dbldef}
\db(n,j) = \eb(n,j) - \frac{2(2n+1)}{j(j+1)(j+2)}\qquad\text{ and for $j \ge 2$, }\qquad
\dl(n,j) = \el(n,j) - \frac{1}{j(j-1)}.
\end{equation}
Equivalently,
\begin{equation*}
\eb(n,j) = \frac{2(2n+1)}{j(j+1)(j+2)} + \db(n,j) \qquad\text{ and for $j \ge 2$, }\qquad
\el(n,j) = \frac{1}{j(j-1)} + \dl(n,j).
\end{equation*}

We wish to rewrite \eqref{eq:lengthform} using the asymptotic limits
of the $\eb(n,j)$ and $\el(n,j)$. To this end, we first compute
\[
\sum_{i=1}^\infty \eb(n,2i) = \sum_{i=1}^\infty \left(\frac{2(2n+1)}{2i(2i+1)(2i+2)} + \db(n,2i)\right) = \sum_{i=1}^\infty \frac{2(2n+1)}{2i(2i+1)(2i+2)} + \sum_{i=1}^\infty \db(n,2i).
\]
Although the sum on the left-hand side is finite, the two sums on the
right-hand side are infinite.  However, it is not hard to see that
these two infinite sums converge.  Indeed, we can evaluate the first
one by using the partial fractions decomposition
\begin{equation}\label{eq:bpartfrac}
\frac{2}{j(j+1)(j+2)} = \frac{1}{j} - \frac{2}{j+1} + \frac{1}{j+2}.
\end{equation}
This gives us
\begin{align*}
&\sum_{i=1}^\infty \frac{2(2n+1)}{2i(2i+1)(2i+2)} = (2n+1) \lim_{N \to \infty} \sum_{i=1}^N \left(\frac{1}{2i} - \frac{2}{2i+1} + \frac{1}{2i+2}\right)\\
&\qquad\qquad\qquad = (2n+1) \lim_{N \to \infty} \left(\frac{1}{2} - \frac{2}{3} + \frac{1}{4} + \frac{1}{4} - \frac{2}{5} + \frac{1}{6} + \cdots + \frac{1}{2N} - \frac{2}{2N+1} + \frac{1}{2N+2}\right)\\
&\qquad\qquad\qquad = (2n+1) \lim_{N \to \infty} \left(\frac{1}{2} - \frac{2}{3} + \frac{2}{4} - \frac{2}{5} + \cdots + \frac{2}{2N} - \frac{2}{2N+1} + \frac{1}{2N+2}\right)\\
&\qquad\qquad\qquad = (2n+1) \lim_{N \to \infty} \left(\frac{3}{2} + \frac{1}{2N+2} - 2\left(1 - \frac{1}{2} + \frac{1}{3} - \frac{1}{4} + \cdots + \frac{1}{2N+1}\right)\right)\\
&\qquad\qquad\qquad = (2n+1)\left(\frac{3}{2} - 2\ln 2\right),
\end{align*}
since the alternating harmonic series converges to $\ln 2$.  Therefore
\begin{equation}\label{eq:sumebeven}
\sum_{i=1}^\infty \eb(n,2i) = (2n+1)\left(\frac{3}{2}-2\ln 2\right) + \sum_{i=1}^\infty \db(n,2i).
\end{equation}

Next, we compute
\[
\sum_{i=1}^\infty \el(n,2i) = \sum_{i=1}^\infty \left(\frac{1}{2i(2i-1)} + \dl(n,2i)\right) =  \sum_{i=1}^\infty \frac{1}{2i(2i-1)} + \sum_{i=1}^\infty \dl(n,2i).
\]
For the first part we again use a partial fractions decomposition:
\[
\frac{1}{j(j-1)} = \frac{1}{j-1} - \frac{1}{j}.
\]
This gives us
\begin{align*}
\sum_{i=1}^\infty \frac{1}{2i(2i-1)} &= \lim_{N \to \infty} \sum_{i=1}^N \left(\frac{1}{2i-1} - \frac{1}{2i}\right)\\
&= \lim_{N \to \infty} \left(1 - \frac{1}{2} + \frac{1}{3} - \frac{1}{4} + \cdots + \frac{1}{2N-1} - \frac{1}{2N}\right) = \ln 2,
\end{align*}
so
\begin{equation}\label{eq:sumeleven}
\sum_{i=1}^\infty \el(n,2i) = \ln 2 + \sum_{i=1}^\infty \dl(n,2i).
\end{equation}

Combining \eqref{eq:lengthform}, \eqref{eq:sumebeven}, and
\eqref{eq:sumeleven}, we see that the expected length of the game is
\begin{align*}
\frac{3n}{2} + \frac{\sum_{i=1}^\infty \eb(n,2i)}{2} - \sum_{i=1}^\infty \el(n,2i)
&= (3-2\ln 2)n + \frac{3}{4} - 2\ln 2 + \frac{\sum_{i=1}^\infty \db(n,2i)}{2} - \sum_{i=1}^\infty \dl(n,2i).
\end{align*}

In the next section, we will show that
\[
\lim_{n \to \infty} \sum_{i=1}^\infty \db(n,2i) = \frac{1}{4} \qquad \text{ and } \qquad \lim_{n \to \infty} \sum_{i=1}^\infty \dl(n,2i) = 0.
\]
Substituting these values in will complete the proof of Theorem~\ref{thm:main}.

\section{Evaluation of Limits}
\label{sec:limsums}

In order to determine the limits required to complete the proof of Theorem~\ref{thm:main}, we need to establish some facts about the $\db(n,j)$
and $\dl(n,j)$.  To establish these facts, we begin by finding effective recurrences that can be used to compute $\db(n,j)$ and $\dl(n,j)$.

\subsection{Sum of $\db(n,2i)$}

First we determine recurrences for the $b(n,j)$.  If $n=1$, then the only standard deal is $1\ 1$, which has a single block of length 2.  So $b(1,2)=1$ and $b(1,j)=0$ for $j \ne 2$.
For $n \ge 2$, as we saw before, any standard deal $\sigma \in M_n$ can be constructed from a standard deal $\sigma' \in M_{n-1}$ by inserting an $n$ in one of the $2n-1$ possible positions in $\sigma'$ and then adding a second $n$ at the end.  Now, what happens to the blocks of $\sigma'$ when this is done?  A block of length $j$ in $\sigma'$ either becomes a block of length $j+1$ (if the first $n$ is inserted into that block) or length $j$ (if the first $n$ is inserted
somewhere else).  Also, there is a new block at the end of length either 2
(if both copies of $n$ are at the end) or 1 (if not).  So, for $n\geq 2$:
\begin{align*}
b(n,1) &= (2n-2) \cdot b(n-1,1) + (2n-2) \cdot \frac{(2n-2)!}{2^{n-1} (n-1)!},\\
b(n,2) &= b(n-1,1) + (2n-3) \cdot b(n-1,2) + \frac{(2n-2)!}{2^{n-1} (n-1)!},\\
b(n,j) &= (j-1) \cdot b(n-1,j-1) + (2n-1-j) \cdot b(n-1,j) \qquad (j \ge 3).
\end{align*}

Dividing by $(2n)!/(2^n n!)$, we get corresponding recurrences for $\eb$:  $\eb(1,2) = 1$, $\eb(1,j) = 0$ for $j \ne 2$, and for $n \ge 2$,
\begin{align*}
\eb(n,1) &= \frac{(2n-2) \cdot (\eb(n-1,1)+1)}{2n-1},\\
\eb(n,2) &= \frac{\eb(n-1,1) + (2n-3) \cdot \eb(n-1,2) + 1}{2n-1},\\
\eb(n,j) &= \frac{(j-1) \cdot \eb(n-1,j-1) + (2n-1-j) \cdot \eb(n-1,j)}{2n-1} \qquad (j \ge 3).
\end{align*}
Finally, we use the definition of $\db(n,j)$ (equation \eqref{eq:dbldef}) to convert these recurrences into recurrences for $\db$.

\begin{lemma}\label{lem:dbrec}
The numbers $\db(n,j)$ satisfy the following equations:
\begin{align*}
\db(1,2) &= \frac{3}{4},\\
\db(1,j) &= -\frac{6}{j(j+1)(j+2)} \qquad (j \ne 2),\\
\intertext{and for $n \ge 2$,}
\db(n,1) &= \frac{(2n-2) \cdot \db(n-1,1)-1}{2n-1},\\
\db(n,2) &= \frac{\db(n-1,1) + (2n-3) \cdot \db(n-1,2) + 1}{2n-1},\\
\db(n,j) &= \frac{(j-1) \cdot \db(n-1,j-1) + (2n-1-j) \cdot \db(n-1,j)}{2n-1} \qquad (j \ge 3).
\end{align*}
\end{lemma}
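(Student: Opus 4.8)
The plan is to obtain the $\db$ recurrences by directly substituting the definition $\eb(n,j) = \frac{2(2n+1)}{j(j+1)(j+2)} + \db(n,j)$ from~\eqref{eq:dbldef} into the recurrences for $\eb(n,j)$ stated immediately before the lemma. Write $P(n,j) = \frac{2(2n+1)}{j(j+1)(j+2)}$ for the main term, so that $\db(n,j) = \eb(n,j) - P(n,j)$. The key point driving the whole argument is that $P(n,j)$ itself satisfies the \emph{homogeneous} parts of the $\eb$ recurrences; once this is checked, subtracting $P(n,j)$ from each $\eb$ recurrence leaves exactly the claimed $\db$ recurrence, with only the inhomogeneous constants left to track. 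For the initial values I would simply compute $P(1,j)$: this gives $\db(1,2) = 1 - P(1,2) = 1 - 1/4 = 3/4$ and $\db(1,j) = -P(1,j) = -6/(j(j+1)(j+2))$ for $j \ne 2$, matching the stated values.

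The substance is the three cases for $n \ge 2$. In each I substitute $\eb = P + \db$ into the corresponding $\eb$ recurrence and confirm that the $P$-terms reproduce $P(n,j)$ on the left. For $j \ge 3$ the identity to verify is $P(n,j) = \frac{(j-1)P(n-1,j-1) + (2n-1-j)P(n-1,j)}{2n-1}$; since this is precisely the homogeneous recurrence, the $\db$-terms inherit it verbatim with no leftover constant. For $j = 2$ the identity is $P(n,2) = \frac{P(n-1,1) + (2n-3)P(n-1,2)}{2n-1}$, so the stray $+1$ in the $\eb(n,2)$ recurrence passes unchanged into the $\db(n,2)$ recurrence. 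The case $j = 1$ needs the most care: the $\eb(n,1)$ recurrence carries an extra $+(2n-2)$ inside its numerator, and the relevant identity is $(2n-2)(P(n-1,1)+1) - (2n-1)P(n,1) = -1$, which is exactly what converts that $+(2n-2)$ contribution into the $-1$ appearing in the claimed $\db(n,1)$ recurrence.

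The only real obstacle is the routine algebra of these three identities for $P$. After clearing the denominators $j(j+1)(j+2)$ and the factor $2n-1$, each collapses to an elementary polynomial identity verified by direct expansion; for instance, multiplying the $j=1$ identity through by $3$ reduces it to $(2n-2)(2n+2) - (4n^2 - 1) = -3$. No genuine difficulty is anticipated beyond careful bookkeeping of the constants, and together with the initial values these verified identities immediately yield all the stated recurrences.
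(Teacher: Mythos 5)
Your proposal is correct and takes essentially the same approach as the paper, which describes the derivation as ``routine algebra'' and illustrates it only for the $\db(n,2)$ case by substituting $\eb(n,j) = \frac{2(2n+1)}{j(j+1)(j+2)} + \db(n,j)$ into the $\eb$ recurrences. Your organization of the computation---isolating the fact that the main term $P(n,j)$ satisfies the homogeneous parts of the recurrences, and checking the three resulting polynomial identities explicitly---is just a cleaner bookkeeping of the same substitution, and all of your identities check out.
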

\begin{proof}
  The derivation of these equations involves only routine algebra.  We
  illustrate by deriving the formula for $\db(n,2)$:
\begin{align*}
\db(n,2) &= \eb(n,2) - \frac{2(2n+1)}{2 \cdot 3 \cdot 4}\\
&= \frac{\eb(n-1,1) + (2n-3) \cdot \eb(n-1,2) + 1}{2n-1} - \frac{2n+1}{12}\\
&= \frac{1}{2n-1} \left(\frac{2(2n-1)}{1 \cdot 2 \cdot 3} + \db(n-1,1) + (2n-3) \cdot \left(\frac{2(2n-1)}{2 \cdot 3 \cdot 4} + \db(n-1,2)\right) + 1\right) - \frac{2n+1}{12}\\
&= \frac{1}{3} + \frac{\db(n-1,1)}{2n-1} + \frac{2n-3}{12} + \frac{(2n-3) \cdot \db(n-1,2)}{2n-1} + \frac{1}{2n-1} - \frac{2n+1}{12}\\
&= \frac{\db(n-1,1) + (2n-3) \cdot \db(n-1,2) + 1}{2n-1}. \qedhere
\end{align*}
\end{proof}

Using these recurrences, we can now study the numbers $\db(n,j)$.  We
begin by determining these numbers for small values of $j$.

\begin{lemma}\label{lem:dbvalues}
\begin{enumerate}
\item For $n\geq 1$,
\[
\db(n,1) = -1.
\]
\item For $n\geq 1$,
\[
\db(n,2) = \frac{3}{4(2n-1)}.
\]
\item For $n\geq 2$,
\[
\db(n,3) = 2\db(n,2) = \frac{3}{2(2n-1)}.
\]
\end{enumerate}
\end{lemma}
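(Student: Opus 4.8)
The plan is to prove each of the three parts by induction on $n$, taking them in order, since each relies on its predecessors. In every case the inductive engine is the appropriate recurrence from Lemma~\ref{lem:dbrec}, and the work amounts to checking that the stated closed form is consistent with that recurrence. The pleasant feature throughout is that each recurrence collapses after the correct cancellation, so no guessing of auxiliary identities is needed.

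For part (1), I would take the base case from the formula $\db(1,j) = -6/(j(j+1)(j+2))$ with $j=1$, which gives $\db(1,1) = -6/6 = -1$. For the inductive step I substitute $\db(n-1,1) = -1$ into the recurrence for $\db(n,1)$ and observe that the numerator becomes $(2n-2)(-1) - 1 = -(2n-1)$, cancelling the denominator $2n-1$ to leave $-1$. Part (2) then uses part (1): with base case $\db(1,2) = 3/4 = 3/(4(2\cdot 1 - 1))$, I substitute $\db(n-1,1) = -1$ and the inductive hypothesis $\db(n-1,2) = 3/(4(2n-3))$ into the recurrence for $\db(n,2)$. The $-1$ from $\db(n-1,1)$ cancels the additive $+1$, and the factor $(2n-3)$ multiplying $\db(n-1,2)$ cancels the $(2n-3)$ appearing in the denominator of $\db(n-1,2)$, leaving numerator $3/4$ over $2n-1$, as desired.

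Part (3) is the only one requiring slight care, and I expect it to be the main (mild) obstacle. Here the governing recurrence is the $j\geq 3$ case, $\db(n,3) = (2\db(n-1,2) + (2n-4)\db(n-1,3))/(2n-1)$, and I would anchor the induction at $n=2$ rather than $n=1$. At $n=2$ the coefficient $2n-4$ vanishes, so $\db(1,3)$ drops out entirely and $\db(2,3) = 2\db(1,2)/3 = (3/2)/3 = 1/2$, matching $3/(2(2\cdot 2 - 1))$; this is why no separate evaluation of $\db(1,3)$ is required. For the inductive step ($n\geq 3$) I feed in $\db(n-1,2) = 3/(4(2n-3))$ from part (2) together with the hypothesis $\db(n-1,3) = 3/(2(2n-3))$; combining over the common denominator $2(2n-3)$ makes the numerator $3(1 + (2n-4)) = 3(2n-3)$, the factor $2n-3$ cancels, and one is left with $(3/2)/(2n-1)$. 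The only thing to watch is the bookkeeping of these cancellations and the fact that part (3)'s base case exploits the vanishing of $2n-4$ at $n=2$; beyond that there is no real obstacle, since the recurrences of Lemma~\ref{lem:dbrec} do all the heavy lifting.
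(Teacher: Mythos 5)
Your proposal is correct and matches the paper's approach exactly: the paper's proof is a one-sentence sketch stating that all three parts follow by induction from the recurrences of Lemma~\ref{lem:dbrec}, each part using its predecessor, and you have supplied precisely those inductions with the right base cases (including the observation that the $2n-4$ coefficient vanishes at $n=2$, so $\db(1,3)$ is never needed). All the cancellations you describe check out.
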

\begin{proof}
  All three statements are proven by induction using
  Lemma~\ref{lem:dbrec}, with the proof of each statement after the
  first also making use of the previous statement.

\end{proof}

\begin{lemma}
For all $n$ and $j \ge 2$,
\begin{equation}\label{eq:dbbound}
  \left|\db(n,j)\right| \le \frac{3}{4} \cdot \frac{2 \cdot 4 \cdot 6 \cdots (2n-2)}{3 \cdot 5 \cdot 7 \cdots (2n-1)}.
\end{equation}
\end{lemma}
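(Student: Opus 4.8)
The plan is to prove the bound by induction on $n$, writing $\beta_n$ for the right-hand side of~\eqref{eq:dbbound}, so that $\beta_1 = 3/4$ (an empty product) and, for $n \ge 2$, $\beta_n = \frac{2n-2}{2n-1}\,\beta_{n-1}$. The engine of the induction is the observation that, in the recurrence for $\db(n,j)$ with $j \ge 3$ from Lemma~\ref{lem:dbrec}, the two coefficients $(j-1)$ and $(2n-1-j)$ are nonnegative whenever $3 \le j \le 2n-1$ and sum to $2n-2$. Together with the $\frac{1}{2n-1}$ in the denominator, this produces exactly the contraction factor $\frac{2n-2}{2n-1}$ relating $\beta_n$ to $\beta_{n-1}$.

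For the base case $n=1$ I would check~\eqref{eq:dbbound} directly from the initial conditions in Lemma~\ref{lem:dbrec}: one has $\beta_1 = 3/4 = \db(1,2)$, while for $j \ge 3$ the quantity $|\db(1,j)| = 6/(j(j+1)(j+2))$ is at most $6/(3\cdot 4\cdot 5) = 1/10 < 3/4$. So the claim holds for every $j \ge 2$ when $n=1$.

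For the inductive step, assuming $|\db(n-1,k)| \le \beta_{n-1}$ for all $k \ge 2$, I would split on the size of $j$. When $3 \le j \le 2n-1$, the triangle inequality applied to the $j \ge 3$ recurrence, together with the inductive hypothesis (which applies to both $\db(n-1,j-1)$ and $\db(n-1,j)$ since their indices are at least $2$), gives $|\db(n,j)| \le \frac{(j-1)+(2n-1-j)}{2n-1}\beta_{n-1} = \beta_n$. The case $j=2$ needs separate handling because $\db(n-1,1) = -1$ (Lemma~\ref{lem:dbvalues}) is larger in absolute value than $\beta_{n-1}$; however, the constant $+1$ in the $j=2$ recurrence exactly cancels it, leaving $\db(n,2) = \frac{2n-3}{2n-1}\db(n-1,2)$, whence $|\db(n,2)| \le \frac{2n-3}{2n-1}\beta_{n-1} < \beta_n$.

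The remaining, and main, obstacle is the range $j \ge 2n$, where the recurrence coefficient $(2n-1-j)$ turns negative and the contraction argument breaks down. Here I would abandon the recurrence and instead use the exact value $\db(n,j) = -\frac{2(2n+1)}{j(j+1)(j+2)}$, which is valid for $j \ge n+2$ because $b(n,j) = 0$ there (and $2n \ge n+2$ since $n \ge 2$). This expression is decreasing in $j$, so its largest absolute value on $j \ge 2n$ occurs at $j = 2n$, where it simplifies neatly to $\frac{1}{2n(n+1)}$. It then remains to verify the elementary inequality $\frac{1}{2n(n+1)} \le \beta_n$, which I would establish by a short auxiliary induction on the explicit product $\beta_n$: one has $\beta_1 = 3/4 \ge 1/4$, and the recursion $\beta_n = \frac{2n-2}{2n-1}\beta_{n-1}$ propagates $\beta_{n-1} \ge \frac{1}{2(n-1)n}$ to $\beta_n \ge \frac{1}{n(2n-1)} \ge \frac{1}{2n(n+1)}$. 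Assembling the three cases completes the induction.
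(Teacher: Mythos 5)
Your proposal is correct and follows essentially the same three-case induction as the paper: the $\frac{2n-2}{2n-1}$ contraction for $3 \le j \le 2n-1$, special treatment of $j=2$ via Lemma~\ref{lem:dbvalues}, and the exact value $-2(2n+1)/(j(j+1)(j+2))$ for $j \ge 2n$. The only differences are cosmetic --- the paper bounds $\db(n,2)$ using its closed form $3/(4(2n-1))$ rather than the cancellation against $\db(n-1,1)=-1$, and verifies $1/(2n(n+1)) \le \beta_n$ by a direct chain of inequalities rather than an auxiliary induction.
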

\begin{proof}
  By induction on $n$.  For $n=1$, we interpret the products in the
  numerator and denominator of the last fraction in \eqref{eq:dbbound}
  as empty products, which are equal to 1.  So the inequality to be
  proven is $\left|\db(1,j)\right| \le 3/4$, which follows easily from
  the formulas in Lemma~\ref{lem:dbrec}.

  Now suppose that $n \ge 2$, and the lemma holds for $n-1$.  Let
\[
C = \frac{3}{4} \cdot \frac{2 \cdot 4 \cdots (2n-4)}{3 \cdot 5 \cdots (2n-3)}.
\]
Then the inductive hypothesis is that for all $j \ge 2$,
$\left|\db(n-1,j)\right| \le C$.  We now consider three cases:

Case 1:  $j=2$.  Then by Lemma~\ref{lem:dbvalues},
\begin{align*}
\left|\db(n,2)\right| &= \frac{3}{4(2n-1)} \le \frac{3}{4} \cdot \frac{2}{2n-1}\\
&\le \frac{3}{4} \cdot \frac{2}{2n-1} \cdot \frac{4 \cdot 6 \cdots (2n-2)}{3 \cdot 5 \cdots (2n-3)} = \frac{3}{4} \cdot \frac{2 \cdot 4 \cdot 6 \cdots (2n-2)}{3 \cdot 5 \cdot 7 \cdots (2n-1)}.
\end{align*}

Case 2:  $3 \le j \le 2n-1$.  Then
\begin{align*}
\left|\db(n,j)\right| &= \left| \frac{(j-1) \cdot \db(n-1,j-1) + (2n-1-j) \cdot \db(n-1,j)}{2n-1} \right|\\
&\le \frac{j-1}{2n-1} \cdot \left|\db(n-1,j-1)\right| + \frac{2n-1-j}{2n-1} \cdot \left|\db(n-1,j)\right|\\
&\le \frac{j-1}{2n-1} \cdot C + \frac{2n-1-j}{2n-1} \cdot C = \frac{2n-2}{2n-1} \cdot C\\
&= \frac{2n-2}{2n-1} \cdot \frac{3}{4} \cdot \frac{2 \cdot 4 \cdots (2n-4)}{3 \cdot 5 \cdots (2n-3)} = \frac{3}{4} \cdot \frac{2 \cdot 4 \cdots (2n-2)}{3 \cdot 5 \cdots (2n-1)}.
\end{align*}

Case 3: $j \ge 2n$.  Since $n \ge 2$, this implies that $j \ge n+2$.
Therefore $\eb(n,j)=0$, so
\[
\db(n,j) = \eb(n,j) - \frac{2(2n+1)}{j(j+1)(j+2)} = -\frac{2(2n+1)}{j(j+1)(j+2)}.
\]
Thus
\begin{multline*}
\left|\db(n,j)\right| = \frac{2(2n+1)}{j(j+1)(j+2)} \le \frac{2(2n+1)}{2n(2n+1)(2n+2)} = \frac{1}{n(2n+2)} \le \frac{1}{2n-1}\\
\le \frac{3}{4} \cdot \frac{2}{2n-1} \le \frac{3}{4} \cdot \frac{2}{2n-1} \cdot \frac{4 \cdot 6 \cdots (2n-2)}{3 \cdot 5 \cdots (2n-3)} = \frac{3}{4} \cdot \frac{2 \cdot 4 \cdots (2n-2)}{3 \cdot 5 \cdots (2n-1)}. \qedhere
\end{multline*}
\end{proof}

To get a better idea of the size of the bound in the last lemma, note that
\[
\left(\frac{2 \cdot 4 \cdots (2n-2)}{3 \cdot 5 \cdots (2n-1)}\right)^2 = \frac{2}{1} \cdot \frac{2}{3} \cdot \frac{4}{3} \cdot \frac{4}{5} \cdots \frac{2n-2}{2n-3} \cdot \frac{2n-2}{2n-1} \cdot \frac{1}{2n-1}.
\]
Now, it is not hard to see that the product
\[
\frac{2}{1} \cdot \frac{2}{3} \cdot \frac{4}{3} \cdot \frac{4}{5} \cdots \frac{2n-2}{2n-3} \cdot \frac{2n-2}{2n-1}
\]
increases as $n$ increases, and it is well known that as $n \to
\infty$ it converges to $\pi/2$ (see, for example, \cite{wallis}).  Therefore for $j
\ge 2$ we have
\begin{equation}\label{eq:wallis}
\left|\db(n,j)\right| \le \frac{3}{4} \cdot \sqrt{\frac{\pi}{2(2n-1)}}.
\end{equation}
In particular, it follows that $\db(n,j) \to 0$ as $n \to \infty$, which justifies our claim that \eqref{eq:blasymp} gives the asymptotic limits for the $\eb(n,j)$.

To prove Theorem~\ref{thm:main}, for fixed $n$, we need to find the sum of the numbers $\db(n,j)$, for $j$ even.  As a start on this, we compute the sum of all of the $\db(n,j)$:
\begin{equation}\label{eq:dbsumall}
\sum_{j=1}^\infty \db(n,j) = \sum_{j=1}^\infty \eb(n,j) - \sum_{j=1}^\infty \frac{2(2n+1)}{j(j+1)(j+2)}.
\end{equation}
The first sum on the right-hand side of equation \eqref{eq:dbsumall}
is the expected number of blocks (of all lengths).  But for every
standard deal the number of blocks is $n$, so this expected number is
$n$.  We can evaluate the second sum on the right-hand side of
\eqref{eq:dbsumall} by using the partial fractions decomposition
\eqref{eq:bpartfrac}:
\begin{align*}
\sum_{j=1}^\infty \frac{2(2n+1)}{j(j+1)(j+2)} &= (2n+1) \lim_{N \to \infty} \sum_{j=1}^N \left(\frac{1}{j} - \frac{2}{j+1} + \frac{1}{j+2}\right)\\
&= (2n+1) \lim_{N \to \infty} \left(1 - \frac{2}{2} + \frac{1}{3} + \frac{1}{2} - \frac{2}{3} + \frac{1}{4} + \cdots + \frac{1}{N} - \frac{2}{N+1} + \frac{1}{N+2}\right)\\
&= (2n+1) \lim_{N \to \infty} \left(\frac{1}{2} - \frac{1}{N+1} + \frac{1}{N+2}\right) = n + \frac{1}{2}.
\end{align*}
Thus,
\[
\sum_{j=1}^\infty \db(n,j) = \sum_{j=1}^\infty \eb(n,j) - \sum_{j=1}^\infty \frac{2(2n+1)}{j(j+1)(j+2)} = n - \left(n + \frac{1}{2}\right) = -\frac{1}{2}.
\]
According to Lemma~\ref{lem:dbvalues}, $\db(n,1) = -1$, so
\begin{equation}\label{eq:dbsumfrom2}
\sum_{j=2}^\infty \db(n,j) = \frac{1}{2}.
\end{equation}

To separate out the contributions of the even- and odd-numbered terms
in \eqref{eq:dbsumfrom2} we will use the following fact.

\begin{lemma}\label{lem:dbshape}
  For $n \ge 1$, there exist numbers $k_n$ and $\ell_n$ such that $2
  \le k_n < \ell_n \le n+2$ and
\begin{align*}
&\db(n,2) \le \db(n,3) \le \cdots \le \db(n,k_n),\\
&\db(n,k_n) \ge \db(n,k_n+1) \ge \cdots \ge \db(n,\ell_n), \qquad \text{ and}\\
&\db(n,\ell_n) \le \db(n,\ell_n+1) \le \cdots.
\end{align*}
\end{lemma}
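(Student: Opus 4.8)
The plan is to reformulate the lemma in terms of the forward differences $\db(n,j+1)-\db(n,j)$ and to show that, as $j$ ranges over $j\ge 2$, the sign sequence of these differences is nonnegative, then nonpositive, then nonnegative---that is, it changes sign at most twice, and in the order $+,-,+$ (with runs possibly empty). The indices $k_n$ and $\ell_n$ are then read off as the locations of the two sign changes: $k_n-1$ is where the first (nonnegative) run ends and $\ell_n-1$ is where the (nonpositive) run ends.

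The engine of the argument is a difference recurrence extracted from Lemma~\ref{lem:dbrec}. Writing the $j\ge 3$ recurrence at both $j$ and $j+1$ and subtracting, a short computation gives, for $j\ge 3$,
\[
(2n-1)\bigl(\db(n,j+1)-\db(n,j)\bigr) = (2n-2-j)\bigl(\db(n-1,j+1)-\db(n-1,j)\bigr) + (j-1)\bigl(\db(n-1,j)-\db(n-1,j-1)\bigr).
\]
Thus each difference at level $n$ is a linear combination of two consecutive differences at level $n-1$, and for $3\le j\le 2n-2$ both coefficients $2n-2-j$ and $j-1$ are nonnegative. A nonnegative combination of two adjacent terms cannot increase the number of sign changes of a sequence, and in fact preserves the shape $+,-,+$: the combined value has a forced sign whenever the two adjacent level-$(n-1)$ differences share a sign, so the only positions whose sign is in doubt are the (at most two) transition positions, and a short case analysis shows that resolving those signs either way leaves a $+,-,+$ pattern.

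To run this as an induction on $n$ I would handle the two ends of the range separately. At the left end, $\db(n,2)<\db(n,3)$ by Lemma~\ref{lem:dbvalues} (the difference equals $3/(4(2n-1))>0$), so for $n\ge 2$ the sequence starts with a genuine increase. At the right end the coefficient $2n-2-j$ turns negative, but here I appeal to the explicit tail: for $j\ge n+2$ we have $\eb(n,j)=0$, so $\db(n,j)=-2(2n+1)/(j(j+1)(j+2))$, which is negative and strictly increasing in $j$; hence the differences are strictly positive for all $j\ge n+2$, supplying the final $+$ run and forcing $\ell_n\le n+2$. Since $2n-1\ge n+2$ for $n\ge 3$, the good range $3\le j\le 2n-2$ of the recurrence and the tail range $j\ge n+2$ together cover all $j\ge 3$ with no gap, so the level-$(n-1)$ shape propagates to level $n$. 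The base cases $n\le 3$ (where the range $[3,2n-2]$ is empty or trivial) I would check by hand from the values in Lemma~\ref{lem:dbvalues} together with the tail formula; for $n=1$ the first run is empty and $k_1=2$.

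The main obstacle is not the averaging step, which is routine once the difference recurrence is in hand, but the bookkeeping at the boundary: I must make sure the final nonnegative run coming out of the variation-diminishing argument on $[3,2n-2]$ meets the tail-positivity region without a gap, and that the two ambiguous transition positions never conspire to create a third sign change. Noting finally that $\db(n,3)>0>\db(n,n+2)$ guarantees that the middle (decreasing) run is nonempty, so that $k_n<\ell_n$ strictly, as the statement requires.
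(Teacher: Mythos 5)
Your proof is correct and follows essentially the same route as the paper's: both arguments induct on $n$, use the recurrence of Lemma~\ref{lem:dbrec} to show that each turning point can move right by at most one position per step, handle the left end via Lemma~\ref{lem:dbvalues}, and handle the right end via the explicit tail formula $\db(n,j)=-2(2n+1)/(j(j+1)(j+2))$ for $j\ge n+2$. Your forward-difference recurrence is just a repackaging of the paper's two-step inequality chain through $(2n-2)\,\db(n-1,j)/(2n-1)$, so the content is identical (though you are somewhat more explicit than the paper about why $k_n<\ell_n$ holds strictly).
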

\begin{proof}
  We use induction on $n$.  It is easy to verify that the lemma holds
  for $n=1$ and $n=2$ (with $k_1 = 2$, $\ell_1 = 3$, $k_2 = 3$, and
  $\ell_2 = 4$).  Now suppose that $n \ge 3$, and the lemma holds for
  $n-1$.  To prove that the lemma holds for $n$, we will show that
\begin{align}
&\db(n,2) \le \db(n,3) \le \cdots \le \db(n,k_{n-1}),\label{eq:dbshape1}\\
&\db(n,k_{n-1}+1) \ge \db(n,k_{n-1}+2) \ge \cdots \ge \db(n,\ell_{n-1}),\label{eq:dbshape2}\\
&\db(n,\ell_{n-1}+1) \le \db(n,\ell_{n-1}+2) \le \cdots.\label{eq:dbshape3}
\end{align}
We can therefore set $k_n$ to be one of $k_{n-1}$ or $k_{n-1}+1$, and
set $\ell_n$ to be one of $\ell_{n-1}$ or $\ell_{n-1}+1$.

To prove \eqref{eq:dbshape1}, suppose that $2 \le j < k_{n-1}$; we
must verify that $\db(n,j) \le \db(n,j+1)$.  If $j=2$, this follows
immediately from Lemma~\ref{lem:dbvalues}.  Now suppose $j \ge 3$, and
note that $j < \ell_{n-1} \le n+1 \le 2n-2$.  Then by the inductive
hypothesis we have
\[
\db(n-1,j-1) \le \db(n-1,j) \le \db(n-1,j+1),
\]
and therefore
\begin{align}
\db(n,j&) = \frac{(j-1) \cdot \db(n-1,j-1) + (2n-1-j) \cdot \db(n-1,j)}{2n-1}\notag\\
&\le \frac{(2n-2) \cdot \db(n-1,j)}{2n-1}\label{eq:dbshapepf}\\
&\le \frac{j \cdot \db(n-1,j) + (2n-2-j) \cdot \db(n-1,j+1)}{2n-1} = \db(n,j+1).\notag
\end{align}

Similar reasoning can be used to prove \eqref{eq:dbshape2}.  To prove
\eqref{eq:dbshape3}, suppose $j \ge \ell_{n-1}+1$.  If $j \le 2n-2$
then we can repeat the reasoning in \eqref{eq:dbshapepf} to show that
$\db(n,j) \le \db(n,j+1)$.  If $j \ge 2n-1$ then $j \ge n+2$, so
$\eb(n,j) = \eb(n,j+1) = 0$, and therefore
\[
\db(n,j) = -\frac{2(2n+1)}{j(j+1)(j+2)} \le -\frac{2(2n+1)}{(j+1)(j+2)(j+3)} = \db(n,j+1). \qedhere
\]
\end{proof}

Lemma~\ref{lem:dbshape} will be useful to us because of the following fact.

\begin{lemma}\label{lem:evenodddiff}
  Let $a_1,a_2,\ldots,a_k$ be a monotonic sequence with the absolute
  value of each term bounded by some constant $\delta$.  Write $A$ for
  the sum of the odd-indexed terms, i.e., $a_1+a_3+\cdots+a_k$ if $k$
  is odd and $a_1+a_3+\cdots+a_{k-1}$ if $k$ is even.  Write $B =
  \sum_{i=1}^ka_i - A$, so that $B$ is the sum of the even-indexed
  terms.  Then
  \begin{equation}
    |B-A| \leq 2\delta.
  \end{equation}
\end{lemma}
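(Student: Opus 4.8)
The plan is to use monotonicity to rewrite $B-A$ as an alternating sum and then bound it by grouping consecutive terms, which are forced to have a definite sign. First I would reduce to the non-decreasing case: if the sequence is non-increasing, replace each $a_i$ by $-a_i$. This negation sends $A \mapsto -A$ and $B \mapsto -B$, so it leaves $|B-A|$ unchanged and preserves the bound $|a_i| \le \delta$, while turning the sequence into a non-decreasing one. So I may assume $a_1 \le a_2 \le \cdots \le a_k$, and then
\[
B - A = -a_1 + a_2 - a_3 + a_4 - \cdots \pm a_k,
\]
where the sign of the final term $a_k$ is $+$ when $k$ is even and $-$ when $k$ is odd.

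To get a lower bound on $B-A$, I would group the terms as $(a_2-a_1)+(a_4-a_3)+\cdots$, pairing each even-indexed term with the odd-indexed term just before it; monotonicity makes every such difference nonnegative. When $k$ is even all terms are paired and this gives $B-A \ge 0$, and when $k$ is odd a single term $-a_k$ is left over, giving $B-A \ge -a_k \ge -\delta$. To get an upper bound I would instead isolate $-a_1$ and pair the rest as $(a_2-a_3)+(a_4-a_5)+\cdots$, so that each parenthesized difference is now nonpositive. This leaves $B-A \le -a_1$ plus a possible leftover final term, which is $a_k$ when $k$ is even and nothing when $k$ is odd; since $-a_1 \le \delta$ and $a_k \le \delta$, this yields $B-A \le 2\delta$ for $k$ even and $B-A \le \delta$ for $k$ odd.

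Assembling the two parities, I would conclude $0 \le B-A \le 2\delta$ when $k$ is even and $-\delta \le B-A \le \delta$ when $k$ is odd, so in either case $|B-A| \le 2\delta$. There is no genuine obstacle here; the only point requiring care is the bookkeeping of which endpoint term survives each grouping, which depends on the parity of $k$. The reason the bound comes out as $2\delta$ rather than $\delta$ is precisely the even case, where both extreme terms $-a_1$ and $a_k$ can each contribute a full $\delta$ to the upper bound.
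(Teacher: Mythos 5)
Your proof is correct and is essentially the paper's argument in a different notation: your pairings $(a_2-a_1)+(a_4-a_3)+\cdots \ge 0$ and $-a_1+(a_2-a_3)+\cdots+a_k$ are exactly the paper's term-by-term comparisons $A \le B$ and $A - a_1 \ge B - a_k$, and you likewise obtain the sharper bound $\delta$ in the odd case. The only cosmetic difference is that you handle the decreasing case by negating the sequence, where the paper simply notes the argument is symmetric.
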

\begin{proof}
  Suppose the sequence is monotonically increasing.  If $k$ is even,
  then
  \[
  A = a_1 + a_3 + \cdots + a_{k-1} \le a_2 + a_4 + \cdots + a_k = B
  \]
  and
  \[
  A - a_1 = a_3 + a_5 + \cdots + a_{k-1} \ge a_2 + a_4 + \cdots + a_{k-2} = B - a_k.
  \]
  This implies that $0 \le B-A \le a_k-a_1$, so $|B-A| \le |a_k|+|a_1|
  \le 2\delta$.  If $k$ is odd, then we obtain the inequality $a_1 \le
  A - B \le a_k$, which leads to the stronger bound of $|B-A|\leq
  \delta$.  The argument is similar if the sequence is monotonically
  decreasing.
\end{proof}

We are finally ready to evaluate the first limit needed to complete
our proof of Theorem~\ref{thm:main}:

\begin{lemma}\label{lem:dbsumeven}
\[
\lim_{n \to \infty} \sum_{i=1}^\infty \db(n,2i) = \frac{1}{4}.
\]
\end{lemma}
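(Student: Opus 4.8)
The plan is to prove $\lim_{n \to \infty} \sum_{i=1}^\infty \db(n,2i) = 1/4$ by splitting the full sum $\sum_{j=2}^\infty \db(n,j)$, which equation~\eqref{eq:dbsumfrom2} tells us equals exactly $1/2$, into its even- and odd-indexed parts and showing that these two parts become equal in the limit. Writing $S_{\mathrm{even}}(n) = \sum_{i=1}^\infty \db(n,2i)$ and $S_{\mathrm{odd}}(n) = \sum_{i=1}^\infty \db(n,2i+1)$, we have $S_{\mathrm{even}}(n) + S_{\mathrm{odd}}(n) = 1/2$ for every $n$. Hence it suffices to show that $S_{\mathrm{even}}(n) - S_{\mathrm{odd}}(n) \to 0$ as $n \to \infty$, since then each part tends to $1/4$.

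\smallskip
\noindent\textbf{Controlling the even--odd difference.}
The difference $S_{\mathrm{even}}(n) - S_{\mathrm{odd}}(n)$ is exactly the alternating sum $-\sum_{j=2}^\infty (-1)^j \db(n,j)$, so I would estimate $\bigl|\sum_{j=2}^\infty (-1)^j \db(n,j)\bigr|$. This is where Lemmas~\ref{lem:dbshape} and~\ref{lem:evenodddiff} do the work. Lemma~\ref{lem:dbshape} guarantees that the finitely many nonzero terms $\db(n,2), \db(n,3), \ldots$ break into at most three monotonic runs: increasing up to index $k_n$, decreasing down to index $\ell_n$, and increasing thereafter (the sequence being eventually zero, since $\db(n,j) = 0$ is false but $\eb(n,j)=0$ forces $\db(n,j) = -2(2n+1)/(j(j+1)(j+2)) \to 0$; the tail is handled by convergence). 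On each monotonic run, Lemma~\ref{lem:evenodddiff} bounds the difference between its odd-indexed and even-indexed subsums by $2\delta$, where $\delta$ is a uniform bound on the absolute values of the terms in that run.

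\smallskip
\noindent\textbf{Supplying the uniform bound and assembling.}
The uniform bound on $|\db(n,j)|$ for $j \ge 2$ is precisely inequality~\eqref{eq:wallis}, which gives $|\db(n,j)| \le \tfrac{3}{4}\sqrt{\pi/(2(2n-1))}$. Applying Lemma~\ref{lem:evenodddiff} separately to each of the (at most) three monotonic runs and adding the resulting bounds, I expect to get
\[
\bigl| S_{\mathrm{even}}(n) - S_{\mathrm{odd}}(n) \bigr| \le 3 \cdot 2 \cdot \frac{3}{4}\sqrt{\frac{\pi}{2(2n-1)}} = \frac{9}{2}\sqrt{\frac{\pi}{2(2n-1)}},
\]
which tends to $0$ as $n \to \infty$. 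Combining this with $S_{\mathrm{even}}(n) + S_{\mathrm{odd}}(n) = 1/2$ yields $S_{\mathrm{even}}(n) \to 1/4$, as desired.

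\smallskip
\noindent\textbf{Anticipated obstacle.}
The main subtlety is bookkeeping at the boundaries between the three monotonic runs: Lemma~\ref{lem:evenodddiff} is stated for a single monotonic finite sequence, and stitching three of them together requires care about whether each run starts at an even or odd index, so that the "odd-indexed" and "even-indexed" labels align consistently across the break points rather than flipping parity. I would handle this by tracking parity across the splits — reindexing each run and absorbing any mismatch into the endpoint terms, each of which is itself bounded by $\delta$ via~\eqref{eq:wallis}. This may inflate the constant (from $2\delta$ per run to something like $3\delta$ per run), but since the bound vanishes regardless of the constant, the limit is unaffected. The only thing that truly matters is that the number of runs is bounded (here, three) independently of $n$, which is exactly what Lemma~\ref{lem:dbshape} provides.
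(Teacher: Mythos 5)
Your proposal is correct and follows essentially the same route as the paper: both use equation~\eqref{eq:dbsumfrom2} for the total sum $1/2$, Lemma~\ref{lem:dbshape} to split the terms into at most three monotonic runs, and Lemma~\ref{lem:evenodddiff} with the bound~\eqref{eq:wallis} to show the even- and odd-indexed subsums differ by at most $6\delta \to 0$. The parity bookkeeping you flag as a potential obstacle is real but harmless, exactly as you say (and the paper silently absorbs it into the same vanishing constant); your only slip is an immaterial sign in writing $S_{\mathrm{even}}-S_{\mathrm{odd}}$ as an alternating series, which does not affect the absolute-value estimate.
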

\begin{proof}
  By Lemma~\ref{lem:dbshape}, for any positive integer $N$ the
  sequence $\db(n,2), \db(n,3), \ldots, \db(n,2N+1)$ consists of at
  most three monotonic subsequences.  We can therefore apply
  Lemma~\ref{lem:evenodddiff} to each subsequence with the bound
  $\delta = (3/4) \sqrt{\pi/(2(2n-1))}$ from \eqref{eq:wallis} to show
  that
\[
\sum_{i=1}^N \db(n,2i+1) - 6\delta \le \sum_{i=1}^N \db(n,2i) \le \sum_{i=1}^N \db(n,2i+1) + 6\delta.
\]
Adding $\sum_{i=1}^N \db(n,2i)$, we can rewrite this as
\[
\sum_{j=2}^{2N+1} \db(n,j) - 6\delta \le 2\sum_{i=1}^N \db(n,2i) \le \sum_{j=2}^{2N+1} \db(n,j) + 6\delta.
\]
Letting $N \to \infty$ and applying equation \eqref{eq:dbsumfrom2}, we conclude that
\[
\frac{1}{2} - 6\delta \le 2\sum_{i=1}^\infty \db(n,2i) \le \frac{1}{2} + 6\delta,
\]
or in other words
\[
\frac{1}{4} - \frac{9}{4} \sqrt{\frac{\pi}{2(2n-1)}} \le \sum_{i=1}^\infty \db(n,2i) \le \frac{1}{4} + \frac{9}{4} \sqrt{\frac{\pi}{2(2n-1)}}.
\]
The lemma follows now by letting $n \to \infty$.
\end{proof}

\subsection{Sum of $\dl(n,2i)$}\label{sec:sumdleven}

We now do similar calculations for the lucky blocks.  Since the steps are similar to those in the last subsection, we skip many of the details.

If $n=1$, then the only block in the standard deal $1\,1$ is lucky, so $l(1,2)=1$ and $l(1,j)=0$ for $j \ge 3$.  For $n \ge 2$, the numbers $l(n,j)$ satisfy the recurrences
\begin{align*}
l(n,2) &= (2n-3) \cdot l(n-1,2) + \frac{(2n-2)!}{2^{n-1}(n-1)!},\\
l(n,j) &= (j-2) \cdot l(n-1,j-1) + (2n-1-j) \cdot l(n-1,j) \qquad (j \ge 3).
\end{align*}
Therefore $\el(1,2)=1$, $\el(1,j) = 0$ for $j \ge 3$, and for $n \ge 2$,
\begin{align*}
\el(n,2) &= \frac{(2n-3) \cdot \el(n-1,2) + 1}{2n-1},\\
\el(n,j) &= \frac{(j-2) \cdot \el(n-1,j-1) + (2n-1-j) \cdot \el(n-1,j)}{2n-1} 
\qquad (j \ge 3).
\end{align*}
Finally, applying the definition of $\dl(n,j)$ we get 
\begin{align*}
\dl(1,2) &= \frac{1}{2},\\
\dl(1,j) &= -\frac{1}{j(j-1)} \qquad (j>2),\\
\intertext{and for $n \ge 2$,}
\dl(n,2) &= \frac{(2n-3) \cdot \dl(n-1,2)}{2n-1},\\
\dl(n,j) &= \frac{(j-2) \cdot \dl(n-1,j-1) + (2n-1-j) \cdot \dl(n-1,j)}{2n-1} \qquad (j \ge 3).
\end{align*}

\begin{lemma}\label{lem:dlbound}
For all $n$ and $j \ge 2$,
\[
\left|\dl(n,j)\right| \le \frac{1}{2(2n-1)}.
\]
Also, for $n \ge 2$,
\[
\dl(n,2) = \dl(n,3) = \frac{1}{2(2n-1)}.
\]
\end{lemma}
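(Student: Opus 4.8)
The plan is to prove both assertions by induction on $n$, closely following the structure of the proof of the bound~\eqref{eq:dbbound} for the $\db(n,j)$. I would first establish the two explicit equalities, since the main bound leans on them for the cases $j=2$ and $j=3$. For $\dl(n,2)$ I would use the one-term recurrence $\dl(n,2) = \frac{2n-3}{2n-1}\,\dl(n-1,2)$: starting from $\dl(1,2) = \frac{1}{2}$, an easy induction gives $\dl(n,2) = \frac{1}{2(2n-1)}$ for all $n \ge 1$, since the factor $\frac{2n-3}{2n-1}$ telescopes against the denominators. For $\dl(n,3)$ I would feed this into the $j=3$ recurrence $\dl(n,3) = \frac{\dl(n-1,2) + (2n-4)\,\dl(n-1,3)}{2n-1}$; assuming inductively that $\dl(n-1,3) = \frac{1}{2(2n-3)}$ (base case $n=2$, where the factor $2n-4$ vanishes), the numerator collapses to $\frac{1 + (2n-4)}{2(2n-3)} = \frac{2n-3}{2(2n-3)} = \frac{1}{2}$, giving $\dl(n,3) = \frac{1}{2(2n-1)}$ for $n \ge 2$.

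For the main bound I would induct on $n$ with the hypothesis $|\dl(n-1,j)| \le C := \frac{1}{2(2n-3)}$ for all $j \ge 2$, splitting into three cases in $j$ exactly as in the $\db$ proof. The base case $n=1$ is immediate from the explicit values ($\dl(1,2)=\frac{1}{2}$ and $\dl(1,j) = -\frac{1}{j(j-1)}$ with $j(j-1) \ge 6$ for $j \ge 3$). In the inductive step, Case~1 ($j=2$) is the equality just established, which meets the bound with equality. Case~2 ($3 \le j \le 2n-1$) uses the triangle inequality on the recurrence: both coefficients $j-2$ and $2n-1-j$ are nonnegative and sum to $2n-3$, so $|\dl(n,j)| \le \frac{(2n-3)C}{2n-1} = \frac{1}{2(2n-1)}$.

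The crux of the whole argument lives in this Case~2 computation: the coefficient sum is $2n-3$ rather than the $2n-2$ appearing in the $\db$ recurrence, and this is precisely what makes $(2n-3)C$ collapse to $\frac{1}{2}$ and hence reproduce the target bound exactly, rather than something weaker. The step I expect to require the most care is the tail, Case~3 ($j \ge 2n$), where the coefficient $2n-1-j$ turns negative and the triangle-inequality estimate of Case~2 is no longer available. Here I would instead observe that $n \ge 2$ forces $j \ge 2n \ge n+2$, so that $\el(n,j) = 0$ and hence $\dl(n,j) = -\frac{1}{j(j-1)}$; then $|\dl(n,j)| = \frac{1}{j(j-1)} \le \frac{1}{2n(2n-1)} \le \frac{1}{2(2n-1)}$. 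Assembling the three cases completes the induction.
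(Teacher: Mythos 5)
Your proof is correct and follows exactly the route the paper intends: the paper's own argument is an induction on $n$ modeled on the three-case proof of the bound \eqref{eq:dbbound} for $\db(n,j)$, with the equalities for $\dl(n,2)$ and $\dl(n,3)$ obtained from the recurrences just as you describe. All of your computations check out, including the key observation that the coefficient sum $2n-3$ makes the bound propagate with equality and the direct evaluation $\dl(n,j) = -1/(j(j-1))$ in the tail case.
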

\begin{proof}
By induction on $n$.
\end{proof}

\begin{lemma}\label{lem:lsum}
For all $n$,
\[
\sum_{j=2}^\infty \el(n,j) = 1\qquad \text{ and } \qquad \sum_{j=2}^\infty \dl(n,j) = 0.
\]
\end{lemma}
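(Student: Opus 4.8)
The plan is to first reduce the second identity to the first, then prove the first by induction. By the definition of $\dl$ (equation~\eqref{eq:dbldef}), for $j \ge 2$ we have $\el(n,j) = \frac{1}{j(j-1)} + \dl(n,j)$, so
\[
\sum_{j=2}^\infty \dl(n,j) = \sum_{j=2}^\infty \el(n,j) - \sum_{j=2}^\infty \frac{1}{j(j-1)}.
\]
The right-hand sum telescopes via $\frac{1}{j(j-1)} = \frac{1}{j-1} - \frac{1}{j}$ to $1$. Hence $\sum_{j\ge 2}\dl(n,j) = \sum_{j\ge 2}\el(n,j) - 1$, and it suffices to prove that $\sum_{j=2}^\infty \el(n,j) = 1$; the second identity will then be immediate.

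To prove $E(n) := \sum_{j=2}^\infty \el(n,j) = 1$, I would induct on $n$. (Each sum is really finite, since $\el(n,j)=0$ for $j \ge n+2$, so there are no convergence issues to address.) For $n=1$, $\el(1,2)=1$ and $\el(1,j)=0$ for $j \ge 3$, so $E(1)=1$. For $n \ge 2$, I sum the stated recurrences for $\el(n,j)$ over all $j \ge 2$ and clear the denominator $2n-1$:
\[
(2n-1)E(n) = 1 + (2n-3)\el(n-1,2) + \sum_{j \ge 3}\bigl[(j-2)\el(n-1,j-1) + (2n-1-j)\el(n-1,j)\bigr].
\]
Reindexing the shifted term by $m=j-1$ turns $\sum_{j\ge 3}(j-2)\el(n-1,j-1)$ into $\sum_{m\ge 2}(m-1)\el(n-1,m)$. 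The crucial observation is that, after this shift, the total coefficient of each $\el(n-1,j)$ collapses to $2n-2$ uniformly: at $j=2$ it is $(2n-3)+(2-1)=2n-2$, and at $j \ge 3$ it is $(j-1)+(2n-1-j)=2n-2$. Therefore
\[
(2n-1)E(n) = 1 + (2n-2)E(n-1) = 1 + (2n-2) = 2n-1,
\]
using the inductive hypothesis, so $E(n)=1$.

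I expect the only delicate point to be the re-indexing bookkeeping that produces the uniform coefficient $2n-2$; this is the lucky-block analogue of the coefficient collapse seen in the $\db$ computation, but here it is self-contained, yielding a closed recurrence for $E(n)$ rather than relying on an external identity such as ``the number of blocks is $n$.'' As a conceptual check on the value $1$, note that $E(n)$ is the expected number of lucky blocks, and a block is lucky precisely when the two copies of its largest value are adjacent in the deal; the same recurrence $(2n-1)E(n) = 1 + (2n-2)E(n-1)$ arises directly from the insertion construction of standard deals, since inserting the first copy of $n$ creates a new adjacency in exactly $1$ of the $2n-1$ gaps and breaks any given existing adjacency in exactly $1$ of the $2n-1$ gaps.
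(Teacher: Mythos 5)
Your proof is correct and follows essentially the same route as the paper: the second identity is reduced to the first via the telescoping sum $\sum_{j\ge 2}\frac{1}{j(j-1)}=1$, and the first is proved by induction on $n$ (a step the paper asserts but does not write out; your summation of the recurrences, with the coefficient of each $\el(n-1,j)$ collapsing to $2n-2$, supplies that omitted detail correctly).
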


Note that the first equation in Lemma~\ref{lem:lsum} indicates that the expected number of adjacent matching pairs in a random deal is $1$.

\begin{proof}
  The first equation can be proven by induction on $n$.  For the
  second, we begin with the definition of $\dl(n,j)$:
\[
\sum_{j=2}^\infty \dl(n,j) = \sum_{j=2}^\infty \el(n,j) - \sum_{j=2}^\infty \frac{1}{j(j-1)} = 1 - \sum_{j=2}^\infty \frac{1}{j(j-1)}.
\]
Now we use partial fractions to evaluate the last sum:
\begin{align*}
\sum_{j=2}^\infty \frac{1}{j(j-1)} &= \lim_{N \to \infty} \sum_{j=2}^N \left(\frac{1}{j-1} - \frac{1}{j}\right) \\
 &= \lim_{N \to \infty} \left(1 - \frac{1}{2} + \frac{1}{2} - \frac{1}{3} + \cdots + \frac{1}{N-1} - \frac{1}{N}\right)
= \lim_{N \to \infty} \left(1 - \frac{1}{N}\right) = 1.
\end{align*}
Thus
\[
\sum_{j=2}^\infty \dl(n,j) = 1-1 = 0. \qedhere
\]
\end{proof}

\begin{lemma}\label{lem:dlshape}
For $n\ge 1$, there exists a number $p_n$ such that $2 \le p_n \le n+2$ and
\begin{align*}
&\dl(n,2) \ge \dl(n,3) \ge \cdots \ge \dl(n,p_n) \qquad \text{and}\\
&\dl(n,p_n) \le \dl(n,p_n+1) \le \cdots.
\end{align*}
\end{lemma}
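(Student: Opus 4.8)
The plan is to mirror the proof of Lemma~\ref{lem:dbshape}, proceeding by induction on $n$. The shape asserted here is simpler---a single descent followed by a single ascent, rather than the up-down-up pattern of the $\db(n,j)$---so I expect to track only one transition point $p_n$ instead of two. First I would dispose of the base cases $n=1$ and $n=2$ by direct computation: from $\dl(1,2)=1/2$ and $\dl(1,j)=-1/(j(j-1))$ for $j>2$ one reads off the U-shape with $p_1=3$, and a short calculation from the recurrences (using $\dl(2,2)=\dl(2,3)=1/6$ and $\dl(2,j)=-1/(j(j-1))$ for $j\ge 4$) gives $p_2=4$. The genuine induction then runs for $n\ge 3$, where the elementary inequalities $2n-1\ge n+2$ and $p_{n-1}+1\le n+2\le 2n-1$ (available since $2\le p_{n-1}\le n+1$ by the inductive hypothesis) are exactly what make the argument close.

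For the inductive step I would establish the two chains
\[
\dl(n,2)\ge\dl(n,3)\ge\cdots\ge\dl(n,p_{n-1})\quad\text{and}\quad\dl(n,p_{n-1}+1)\le\dl(n,p_{n-1}+2)\le\cdots,
\]
and then set $p_n\in\{p_{n-1},p_{n-1}+1\}$ according to the one uncontrolled comparison between $\dl(n,p_{n-1})$ and $\dl(n,p_{n-1}+1)$: if the former dominates, the descent extends one step and I take $p_n=p_{n-1}+1$; otherwise the ascent extends backward and I take $p_n=p_{n-1}$. Either choice preserves $2\le p_n\le n+2$. The workhorse for both chains is the same pivoting trick used for $\db$: for $3\le j\le 2n-2$ the recurrence
\[
\dl(n,j)=\frac{(j-2)\dl(n-1,j-1)+(2n-1-j)\dl(n-1,j)}{2n-1}
\]
has coefficients summing to $2n-3$, so whenever the three consecutive values $\dl(n-1,j-1),\dl(n-1,j),\dl(n-1,j+1)$ are increasing I can sandwich $\dl(n,j)\le\frac{(2n-3)\dl(n-1,j)}{2n-1}\le\dl(n,j+1)$, yielding $\dl(n,j)\le\dl(n,j+1)$; reversing every inequality handles the descent.

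Two boundary issues need separate care, and these are where I expect the only real friction. The index $j=2$ does not obey the generic recurrence, so for the first link of the descent I would invoke Lemma~\ref{lem:dlbound}, which gives $\dl(n,2)=\dl(n,3)$ outright. At the other end, the pivoting argument requires $2n-1-j\ge 0$ and so only covers $j\le 2n-2$; for $j\ge 2n-1$ (which forces $j\ge n+2$ once $n\ge 3$) I would instead use that $\el(n,j)=0$, whence $\dl(n,j)=-1/(j(j-1))$, a sequence manifestly increasing toward $0$, which supplies the ascent directly. The main bookkeeping obstacle is simply matching these two regimes: I must check that $p_{n-1}+1\le 2n-1$ so that the recurrence range and the tail range together cover every index past the pivot with no gap, but this again reduces to the inequalities noted above. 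Everything else is the routine coefficient arithmetic already rehearsed for the $\db(n,j)$.
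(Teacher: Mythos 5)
Your proposal is correct and follows essentially the same route as the paper: induction on $n$ with base cases $p_1=3$, $p_2=4$, the two chains broken at $p_{n-1}$ with $p_n\in\{p_{n-1},p_{n-1}+1\}$, and the same pivoting argument adapted from Lemma~\ref{lem:dbshape} (the paper's proof simply defers to that lemma for the details you spell out). Your handling of the boundary cases---$j=2$ via Lemma~\ref{lem:dlbound} and the tail $j\ge 2n-1$ via $\el(n,j)=0$---matches what the paper's sketch implicitly requires.
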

\begin{proof}
By induction on $n$.  It is easy to check that $p_1 = 3$ and $p_2 = 4$.  To establish the lemma for $n \ge 3$, we show that
\begin{align*}
&\dl(n,2) \ge \dl(n,3) \ge \cdots \ge \dl(n,p_{n-1}) \qquad \text{and}\\
&\dl(n,p_{n-1}+1) \le \dl(n,p_{n-1} + 2) \le \cdots.
\end{align*}
The proofs are similar to those in the proof of Lemma~\ref{lem:dbshape}.
\end{proof}

\begin{lemma}\label{lem:dlsumeven}
\[
\lim_{n \to \infty} \sum_{i=1}^\infty \dl(n,2i) = 0.
\]
\end{lemma}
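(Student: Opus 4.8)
The plan is to run the proof of Lemma~\ref{lem:dbsumeven} essentially verbatim, substituting the $\dl$-analogues of its three ingredients. The facts already in hand are exactly what that argument needs: the shape statement Lemma~\ref{lem:dlshape}, which says $\dl(n,2),\dl(n,3),\dots$ decreases and then increases, so it is a concatenation of at most \emph{two} monotonic runs (as opposed to the three runs for $\db$); the uniform bound from Lemma~\ref{lem:dlbound}, giving $|\dl(n,j)| \le \delta$ with $\delta = 1/(2(2n-1))$; and the total-sum identity from Lemma~\ref{lem:lsum}, namely $\sum_{j=2}^\infty \dl(n,j) = 0$. The even-versus-odd comparison of Lemma~\ref{lem:evenodddiff} then does the rest.

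First I would fix $N$ and apply Lemma~\ref{lem:evenodddiff} to each of the (at most two) monotonic runs making up $\dl(n,2),\dots,\dl(n,2N+1)$. Within a run of consecutive indices the even-$j$ and odd-$j$ terms alternate, so the lemma bounds the gap between their partial sums by $2\delta$ per run; summing the two runs via the triangle inequality yields
\[
\Bigl|\sum_{i=1}^N \dl(n,2i) - \sum_{i=1}^N \dl(n,2i+1)\Bigr| \le 4\delta.
\]
Adding $\sum_{i=1}^N \dl(n,2i)$ throughout and noting that the even-$j$ and odd-$j$ terms together exhaust $\sum_{j=2}^{2N+1}\dl(n,j)$, this rearranges to
\[
\sum_{j=2}^{2N+1}\dl(n,j) - 4\delta \le 2\sum_{i=1}^N \dl(n,2i) \le \sum_{j=2}^{2N+1}\dl(n,j) + 4\delta.
\]

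Next I would let $N \to \infty$ and invoke $\sum_{j=2}^\infty \dl(n,j) = 0$ from Lemma~\ref{lem:lsum}, which collapses the outer bounds and gives
\[
\Bigl|\sum_{i=1}^\infty \dl(n,2i)\Bigr| \le 2\delta = \frac{1}{2n-1}.
\]
Letting $n \to \infty$ forces the sum to $0$. The only genuine obstacle is the bookkeeping in the first step: I must confirm that the at-most-two-run structure of Lemma~\ref{lem:dlshape} licenses a run-by-run application of Lemma~\ref{lem:evenodddiff}, and that the global parity split in $j$ agrees, up to relabeling, with the local odd/even alternation inside each run, so that the per-run bound $2\delta$ aggregates to $4\delta$. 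This is the same subtlety that underlies the $\db$ computation; once it is settled, the remainder is routine and in fact cleaner than the $\db$ case, since the total sum is $0$ (leaving no nonzero residual limit) and there are two monotonic runs rather than three.
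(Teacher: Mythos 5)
Your proposal is correct and follows essentially the same route as the paper: apply Lemma~\ref{lem:evenodddiff} to the at most two monotonic runs guaranteed by Lemma~\ref{lem:dlshape} with the bound $\delta = 1/(2(2n-1))$ from Lemma~\ref{lem:dlbound}, combine with $\sum_{j\ge 2}\dl(n,j)=0$ from Lemma~\ref{lem:lsum} to get $\bigl|\sum_{i\ge 1}\dl(n,2i)\bigr|\le 1/(2n-1)$, and let $n\to\infty$. The bookkeeping point you flag is handled exactly as in the proof of Lemma~\ref{lem:dbsumeven}, which the paper likewise invokes by analogy.
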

\begin{proof}
By Lemma~\ref{lem:dlshape}, for any positive integer $N$ the sequence $\dl(n,2), \dl(n,3), \ldots, \dl(n,2N+1)$ consists of at most two monotonic subsequences.  As in the proof of Lemma~\ref{lem:dbsumeven}, we apply Lemma~\ref{lem:evenodddiff} with $\delta = 1/(2(2n-1))$ to show that
\[
\sum_{i=1}^{N} \dl(n,2i+1) - 4\delta \le \sum_{i=1}^N \dl(n,2i) \le \sum_{i=1}^{N} \dl(n,2i+1) + 4\delta.
\]
Adding $\sum_{i=1}^N \dl(n,2i)$, letting $N \to \infty$, and dividing by 2, we conclude that
\[
-\frac{1}{2n-1} \le \sum_{i=1}^\infty \dl(n,2i) \le \frac{1}{2n-1},
\]
and the lemma follows.
\end{proof}

This completes the proof of Theorem~\ref{thm:main}.  Observe that the
proofs of Lemma~\ref{lem:dbsumeven} and~\ref{lem:dlsumeven} indicate
that $\epsilon_n$ in Theorem~\ref{thm:main} is
$\displaystyle{O\left(\frac{1}{\sqrt{n}}+\frac{1}{n}\right) =
  O\left(\frac{1}{\sqrt{n}}\right)}$.  As a bonus, our last lemma also
gives us the answer to Question 2:

\begin{cor}
  The expected number of lucky moves in a game tends to $\ln 2$ as $n \to \infty$.
\end{cor}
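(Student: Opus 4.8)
The plan is to recognize the expected number of lucky moves as precisely the quantity $\sum_{i=1}^\infty \el(n,2i)$ already analyzed in the proof of Theorem~\ref{thm:main}, and then to read off its limit from Lemma~\ref{lem:dlsumeven}.

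First I would verify that, for any standard deal $\sigma$, the number of lucky moves in the corresponding game equals $l(\sigma)$, the number of even-length lucky blocks. Recall that a lucky move turns over two previously unflipped cards that happen to match. Using the block decomposition of Section~\ref{sec:explen}, I would observe that such a match can occur only at the end of a block: within block $i$ (the segment terminating at the second occurrence of $i$) the only label whose two copies can both appear is $i$ itself, and its second copy sits at the very end of the block. Hence a freshly flipped matching pair arises exactly when the first copy of $i$ is the second-to-last card of its block---so that the block is \emph{lucky}---and the block has even length, so that the two copies of $i$ fall together in one move rather than straddling two consecutive moves. This is precisely the event tallied by $l(\sigma)$, as confirmed by the $n=6$ game of Example~\ref{ex:n=6ex}, which has exactly one lucky move and $l(\sigma)=1$.

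Granting this identification, the expected number of lucky moves is the expected value of $l(\sigma)$, namely $\sum_{i=1}^\infty \el(n,2i)$. Equation~\eqref{eq:sumeleven} already records that
\[
\sum_{i=1}^\infty \el(n,2i) = \ln 2 + \sum_{i=1}^\infty \dl(n,2i),
\]
and Lemma~\ref{lem:dlsumeven} shows that the correction term tends to $0$ as $n \to \infty$. Letting $n \to \infty$ then yields the limit $\ln 2$.

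The only delicate point is the combinatorial identification in the second paragraph; everything else is a direct appeal to results already in hand. The pitfall to rule out is an interior lucky match within a block, but this cannot happen: two freshly turned cards that match would be removed on the spot and so would terminate a block rather than lie in its interior.
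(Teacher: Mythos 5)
Your proposal is correct and follows the paper's own proof exactly: identify the lucky moves with the even-length lucky blocks, so the expectation is $\sum_{i=1}^\infty \el(n,2i)$, and then combine equation~\eqref{eq:sumeleven} with Lemma~\ref{lem:dlsumeven}. The only difference is that you spell out the block-parity justification for the identification, which the paper states in a single sentence.
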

\begin{proof}
There is a lucky move for each lucky block whose length is even, so
the expected number of lucky moves is $\sum_{i=1}^\infty \el(n,2i)$.
The lemma now follows from equation~\eqref{eq:sumeleven} and
Lemma~\ref{lem:dlsumeven}.
\end{proof}

\section{Acknowledgments}
The authors would like to thank Dan Archdeacon for useful
conversations leading to this paper.


\end{document}